\theoremstyle{plain}
\newtheorem{thm}{Theorem}[section]
\newtheorem{cor}[thm]{Corollary}
\newtheorem{rem}[thm]{Remark}
\newtheorem{ques}[thm]{Question}
\newtheorem{prob}[thm]{Problem}
\theoremstyle{remark}
\newtheorem*{acknowledgement}{Acknowledgement}
\def\bbb{\mathbb}
\def\op{\operatorname}
\renewcommand{\phi}{\varphi}
\newcommand{\N}{\bbb{N}}
\newcommand{\Z}{\bbb{Z}}
\newcommand{\Q}{\bbb{Q}}
\let\@@pmod\pmod
\DeclareRobustCommand{\pmod}{\@ifstar\@pmods\@@pmod}
\def\@pmods#1{\mkern4mu({\operator@font mod}\mkern 6mu#1)}
\begin{document}
\bibliographystyle{plain}
\title{Diophantine problems related to cyclic cubic and quartic fields}
\author{Szabolcs Tangely and Maciej Ulas}

\keywords{} \subjclass[2010]{}
\thanks{The research of the first author was supported in part by the NKFIH grants 115479, 128088 and 130909 and by the project EFOP-3.6.1-16-2016-00022, co-financed by the European Union and the European Social Fund. The research of the second author is supported by the grant of the National Science Centre (NCN), Poland, no. UMO-2019/34/E/ST1/00094}

\begin{abstract}
We are interested in solving the congruences $f^3+g^3+1\equiv 0\pmod{fg}$
and
$f^4-4g^2+4\equiv 0\pmod{fg}$ in polynomials $f, g$ with rational coefficients. Moreover, we present results of computations of all integer points
on certain one parametric curves of genus 1 and 3, related to cubic and quartic fields, respectively.
\end{abstract}

\maketitle

\section{Introduction}\label{sec1}
A family of cubic fields studied by Cohn \cite{CohnCubic}, Ennola \cite{EnnolaCubic} and Shanks \cite{ShanksSC} is related to the parametric polynomial
$$
X^3 + (n + 3)X^2 + nX - 1=h_{n}(X).
$$
The family $(h_{n}(X)){n\in\N}$ has been further investigated by many authors e.g. by  Lecacheux \cite{LecaCubic}, Lettl \cite{LettlCubic} and Washington \cite{WashC1}.
Kishi \cite{Kishi} and Washington \cite{WashC2} provided other one-parameter families with similar properties. Later, Balady \cite{Bal3,BalPhD} generalized Washington's procedure \cite{WashC2} and described a method for
generating other one-parameter families. In this construction polynomials
$f$ and $g$ having integral coefficients such that the fraction
$$
\frac{f(t)^3+g(t)^3+1}{f(t)g(t)}=:\lambda(t)
$$
is also a polynomial with integral coefficients play an important role. More precisely, if $\lambda\in \Z[t]$, then the pair of polynomials $(f, g)$ determines one-parameter family of polynomials
$$
P_{f,g}(X)=X^{3}+a(n)X^2+\lambda(n)X-1,
$$
where $a(n)=3(f(n)^2+g(n)^2-f(n)g(n))-\lambda(n)(f(n)+g(n))$. For all but finitely many values of $n$, the polynomial $P_{f,g}$ has three real roots which are units and more importantly the field generates by these roots is cyclic cubic field.

These families turned out to be interesting from Diophantine number theory point of view as well. Indeed, for any given $n$, the equation $x^3+y^3+\lambda(n)xy+1=0$, where $n$ is an integer parameter, defines a curve in cubic Hessian form and defines genus 1 curve (and thus can be seen as a one possible model of an appropriate elliptic curve). Moreover, it satisfies Runge type condition and thus, for any given $n$, one can offer an efficient algorithm to finding all integral points without necessity of computing generators of the Mordell-Weil group of an associate elliptic curve.

On the other side, the equation $Y^3h_{n}(X/Y)=\pm 1$ associated with the simplest cubic fields of Shanks, defines a (parametric) Thue type equation and it is natural to ask about characterization of its integral solutions. In this direction one may consult the results \cite{LPV, LevWal, MPL, Thomas}. For any given $n$, one can also consider the equation $h_{n}(X)=Y^2$ as the equation defining an elliptic curve and mention \cite{DuqSC}, where questions concerning the existence of integer/rational points, characterization of torsion groups and ranks are investigated. It is clear that analogous question can be asked to the more general family $(P_{f(n),g(n)})_{n\in\N}$.

Similarly, families of cyclic quartic fields with explicit units have been studied by many authors (see e.g. \cite{Gras,LecaQ,LecaCubic}). Based on ideas given in the paper \cite{Bal3} Balady and Washington \cite{BalWas4} studied cyclic quartic fields. They considered the polynomial
$$
X^4+(2s^3+Ls^2-4ps+2Lp)X^3-(3s^2+3Ls-6p)X^2+2LX+1,
$$
where
$$
L=-\frac{s^4-4p^2+4}{4 s p}.
$$
If $L$ or $2L\in\Z$ then the roots of the above polynomial are units. Under suitable specialization we get a quartic number field with cyclic Galois group of order 4 and the roots generate the group of units or a subgroup of index 5. We thus see that the necessary condition to get something interesting is the investigation of the divisibility condition $4fg|(f^4-4g^2+4)$ which can be seen as a quartic analogue of the condition $fg|(f^3+g^3+1)$ considered in a cubic case.

The findings by Shanks, Balady and Balady and Washington motivated us to look for solutions of the congruences
\begin{equation}\label{maincong}
f^3+g^3+1\equiv 0\pmod*{fg}
\end{equation}
and
\begin{equation}\label{quartic}
f^4-4g^2+4\equiv 0\pmod*{fg}
\end{equation}
where $f, g$ are polynomials with rational coefficients. Note that if $f,
g$ are treated as integers, then from the recent results of Schinzel \cite{Sch1} (which extend earlier results of Mordell \cite{Mor1}) we know that each congruence above has infinitely many solutions. Thus, the question
concerning the existence of polynomial solutions is quite natural in this
direction too. It is clear that finding polynomial solutions is more difficult question. Moreover, before we start let us observe that if $f, g\in\Z[t]$ is a solution, then for each $h\in\Z[t]$ the polynomials $f\circ h, g\circ h$ is also a solution. Thus, our first attempt to remedy this situations is to looking solutions of the following form
\begin{equation}\label{fg}
f(t)=\sum_{i=0}^{m-2}a_{i}t^{i}+a_{m}t^{m},\quad g(t)=\sum_{i=0}^{n}b_{i}t^{i}.
\end{equation}
In other words, instead of asking for $f, g$ with integer coefficients we
are asking for $f, g$ with rational coefficients. Here, as usual, $m=\op{deg}f, n=\op{deg}g$, i.e., $a_{m}b_{n}\neq 0$. In order to deal with the case of substitutions $h$ of the form $h(t)=pt, p\in\Q\setminus\{0\}$ or $h\in\Q[t]$ is of degree $\geq 2$, we will be interested only in the {\it non-trivial} solutions. More precisely, we say that a pair of polynomials $f, g\in\Q[t]$ of the form given above, is a non-trivial solution
of the congruence (\ref{maincong}) (or the congruence (\ref{quartic})) if
and only if:
\begin{enumerate}
\item[(a)] there are no polynomials $f_{1}, g_{1}\in\Q[t]$ and the polynomial $h\in\Q[t]$ of degree $\geq 2$ such that $f=f_{1}\circ h, g=g_{1}\circ h$;
\item[(b)] the leading coefficient $a_{m}$ of the polynomial $f$ is free of $m$-th power of a rational integer. In other words, for each prime $p$, we have $|\nu_{p}(a_{m})|<m$, where $\nu_{p}(q_{1}/q_{2})$ is the $p$-adic valuation of the rational number $q_{1}/q_{2}$.
\end{enumerate}

Let us describe the content of the paper in some details. In Section \ref{sec2} and Section \ref{sec3} we investigate congruences (\ref{maincong}), (\ref{quartic}), respectively. In particular, we characterize all polynomial solutions $(f, g)$ satisfying $\deg f\leq 2$. In case of the congruence (\ref{quartic}) we also characterize solutions with $\deg f=3, \deg g=2$.

In Section \ref{sec4} we investigate the existence and characterizations of all integer points on the curves $H_{d}:\;x^3+y^3+dxy+1=0$ form computational point on view. In fact, for any $d\in [-2\cdot 10^6, 2\cdot 10^6]$ we found all integer points on $H_{d}$. A similar investigations are presented in the case of genus three curves $X_{T}:\; (x+y)^4-4x^2y^2+Txy(x+y)+4=0$, where $T$ is even and $2\leq T\leq 15\cdot 10^6$ and $Q_{T}:\;x^4-4y^2+4Txy+4=0$ for $T\leq 5\cdot 10^6$.
 
\section{Cubic case}\label{sec2}

It is clear that the study of the existence of solutions of the congruence (\ref{maincong}) is equivalent with studying the existence of solutions
of the system of congruences
\begin{equation}\label{main}
f^{3}+1\equiv 0\pmod{g},\quad g^{3}+1\equiv \pmod{f}.
\end{equation}
Moreover, due to symmetric nature of (\ref{maincong}) without loss of generality we can assume that $m\leq n$. As an immediate consequence of the equivalent formulation we get that $n\leq 3m$. In other words, if we fix the degree of a polynomial $f$ then the degree $n$ of $g$ is bounded by $3m$.

Let
$$
\bar{A}=(a_{0}, a_{1}, \ldots, a_{m-2}, a_{m}), \quad \bar{B}=(b_{0},
b_{1}, \ldots, b_{n-1}, b_{n})
$$
be the vectors of variables. We define the $F_{i}=F_{i}(\bar{A},\bar{B}), i=0,\ldots n-1$ and $G_{i}(\bar{A},\bar{B}), i=0,\ldots, m-1$ as the numerators of the coefficients in the remainders of divisibility of $f^3+1\pmod{g}$ and $g^3+1\pmod{f}$ respectively. More precisely,
\begin{align*}
(f(t)^3+1)\pmod{g(t)}&=\sum_{i=0}^{n-1}F_{i}(\bar{A},\bar{B})t^{i},\\
(g(t)^3+1)\pmod{f(t)}&=\sum_{i=0}^{m-1}G_{i}(\bar{A},\bar{B})t^{i},
\end{align*}
i.e., the above congruences are unique up to multiplication by appropriate power of $b_{n}$ and $a_{n}$ respectively.

Our first aim is to attack the congruence (\ref{maincong}) in a systematic way. More precisely, we are interested in finding all non-trivial solutions $f, g$ of (\ref{maincong}) in the case when $\op{deg}f\leq 2$. It is
clear that in order to find the pairs of polynomials satisfying the system (\ref{main}) we need to study rational solutions of the system
\begin{equation*}
S(m,n):\;\begin{cases}\begin{array}{cc}
                        F_{i}(\bar{A},\bar{B})=0, & \quad i=0,\ldots,
n-1, \\
                        G_{j}(\bar{A},\bar{B})=0, & \quad j=0,\ldots,
m-1.
                      \end{array}
\end{cases}
\end{equation*}

Let us note that for given $m, n$, the system $S(m,n)$ is defined by $m+n$ equations in $m+n+1$ variables. We could expect, that in general $S(m,n)$ defines a curve. This is not the case due to the problem with trivial solutions. More precisely, let $d$ be a divisor of $\gcd(m,n)$ and observe that with any given solution $f, g$ of the system $S(m/d,n/d)$ we get the solution $f\circ h, g\circ h$ of system $S(m,n)$, where $h$ is of degree $d$. Thus in this case, in some sense we will get $d$ dimensional set of trivial solutions. In particular, because of $d=1$ we will always have a one dimensional set of trivial solutions. The problem is that the system ``doesn't seem" the difference between trivial and non-trivial solutions. Thus, we expect that essentially we have $m+n$ true variables (which is the case of $\gcd(m,n)=1$). In other words, the set of non-trivial
solutions in case of fixed value of $(m,n)$ is finite. Thus, the strategy, at least in theory, is clear. Compute the Gr\"{o}bner basis of the ideal generated by the polynomials defined the system $S(m,n)$ and extract for it all non-trivial solutions. It is clear that the necessary and sufficient condition for solvability of $S(m,n)$ is the vanishing of each polynomial in the Gr\"{o}bner basis. Unfortunately, the corresponding ideals are quite big and the computations start to be difficult very fast. This is the reason, why we will deal with small values of $m=\op{deg}f$, i.e.,  with $m\leq 2$. One can think that our aim is modest, but as we will see even in such relatively small cases of $\deg f$, the corresponding systems of equations are quite big and difficult to solve. Note that the same discussion applies also to the congruence (\ref{quartic}).

All Gr\"{o}bner basis computations were performed with the Mathematica \cite{math}. The program was running on a laptop with 32 GB of RAM memory and i7 type processor. We used standard lexicographic order in our computations, i.e.,
$$
b_{n}<b_{n-1}<\ldots b_{1}<b_{0}<a_{m}<a_{m-2}<\ldots <a_{1}<a_{0}.
$$

All the systems considered in this section are presented in the full form in Appendix A.  

\subsection{The case $\deg f=1$}\label{subcubic1}

Without loss of generality we can write $f(t)=t$. Thus $g(t)$ is a divisor of
$$
f(t)^3+1=(t+1)(t^2-t+1).
$$
Thus, the polynomial $g$ takes one of the following form: $g(t)=b_{1}(t+1)$ or $g(t)=b_{2}(t^2-t+1)$ or $g(t)=b_{3}(t^3+1)$. A quick inspection reveals that $b_{1}=-1$ or $b_{2}=-1$ or $b_{3}=-1$. Thus, the only non-trivial solutions of the congruence (\ref{maincong}) in case of $\deg f=1$ are given by
\begin{equation}\label{m1}
\begin{array}{lll}
  f(t)=t, & g(t)=-t-1,     & \lambda(t)=3,\\
  f(t)=t, & g(t)=-t^2+t-1, & \lambda(t)=t^3-2 t^2+3 t-3\\
  f(t)=t, & g(t)=-t^3-1,   & \lambda(t)=t^2 \left(t^3+2\right).
\end{array}
\end{equation}

In particular, in case of the system $S(m,im)$ with $i=1,2,3$ we will always have trivial solutions of the form $f\circ h, g\circ h$, where $h$ is a general polynomial of degree $m$ and the pair $(f, g)$ is the one from (\ref{m1}).

In the following subsection we offer systematic analysis of the rational solutions of the system $S(2,n)$ for $n\in\{2,\ldots,6\}$.

\subsection{The case $\deg f=2, \deg g=2$}\label{subcubic22}

Here we investigate rational solutions of the polynomial system $S(2,2)$.

First we consider the case $b_{1}=0$. If $b_{1}=0$ then the polynomials $F_{1}, G_{1}$ vanish and we deal with the system
$$
S'(2,2):\;P_{1}P_{2}=0,\quad Q_{1}Q_{2}=0,
$$
where

$$
\begin{array}{ll}
P_{1}=a_2 b_0-a_0 b_2-b_2, & P_{2}=(b_0^2+b_{2}^2)a_2^2-2 a_0 a_2 b_2
b_0+a_2 b_2 b_0-a_0b_2^2+b_2^2,\\
Q_{1}=a_2 b_0-a_0 b_2+a_2, &       Q_{2}=(b_0^2+b_{2}^2)a_2^2-a_2^2 b_0+a_0 a_2 b_2-2 a_0 a_2 b_0 b_2+a_2^2.
\end{array}
$$

In order to solve the system $S'(2,2)$ it is enough to solve the four systems $P_{i}=Q_{j}=0$ for $i, j\in\{1,2\}$.

If $P_{1}=Q_{1}=0$,  then $a_{0}=-b_{0}-1, b_{2}=-a_{2}$. This leads to the trivial solution which is re-parametrization of the first solution in (\ref{m1}) with $t$ replaced by $a_{2}t^2-b_{0}-1$.

If $P_{1}=0, Q_{2}=0$, then from the first equation we get $a_{0}=(a_2 b_0-b_2)/b_2$. Putting this into the second one we get the equation $a_2^2-a_{2}b_{2}+b_2^2=0$ without non-zero solutions in rationals. In exactly the same way we deal with the system $P_{2}=0, Q_{1}=0$.

Finally, if $P_{2}=0, Q_{2}=0$, then we observe that $P_{2}-Q_{2}=(a_2+b_2)(a_2 b_0-a_0 b_2-a_2+b_2)$. If $a_{2}=-b_{2}$ then we get the equation $(a_0+b_0)^2-(a_0+b_0)+1=0$ without solutions in rationals. If $a_{2}+b_{2}\neq 0$, then we need to have $b_{0}=(a_0 b_2+a_2-b_2)/a_2$
and our system reduces to the equation $a_2^2-a_{2}b_{2}+b_2^2=0$ without non-zero solutions in rationals.

Now let $b_{1}\neq 0$. In order to get the solutions of the system $S(2,2)$ we compute the Gr\"{o}bner basis, say $G(2,2)$, of the ideal generated
by the set of polynomials
$\{F_{0}, F_{1}/b_{1}, G_{0}, G_{1}/b_{1}\}$.  The set $G(2,2)$ contains polynomials $H_{i}, i=1,\ldots, 25$. We have that
$$
\begin{array}{ll}
H_{1}=(8a_0-3)(64 a_0^2+24 a_0+9)a_2^5, &   H_{2}=a_2^5(8 b_0-7)(64b_0^2+56b_0+49),\\
H_{3}=-a_2^4(8192a_0^2a_2b_0^2-441b_1^2), &       H_{8}=-a_2^2(2a_0b_1^4-3a_2b_2).
\end{array}
$$

Let us recall that $a_{2}\neq 0$. We thus solve the triangular (with respect to the variables $a_{0}, b_{0}, a_{2}, b_{2}$) system defined by the vanishing of the polynomials from the set $H$ and get
$$
a_0=\frac{3}{8},\quad b_0=\frac{7}{8},\quad a_2=\frac{b_1^2}{2},\quad b_{2}=\frac{b_1^2}{2}.
$$
In fact, the expressions for $a_{0}, b_{0}, a_{2}, b_{2}$ given above solve also the whole system $S(2,2)$. We thus get the solution
$f(t)=(4 b_1^2 t^2+3)/8, g(t)=(4 b_1^2 t^2+8 b_1 t+7)/8$,
which after the re-parametrization $t\rightarrow (2t-1)/2b_{1}$, takes the form
\begin{equation}\label{sol22}
f(t)=\frac{1}{2}(t^2-t+1),\quad g(t)=\frac{1}{2}(t^2+t+1)=f(-t).
\end{equation}
We also have $\lambda(t)=t^2+5$.

We thus proved

\begin{thm}\label{22case}
The only non-trivial solution of the system {\rm (\ref{main})} with $\op{deg}f=\op{deg}g=2$ is the one given by {\rm (\ref{sol22})}.
\end{thm}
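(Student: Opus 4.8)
The plan is to turn the statement into a finite elimination problem and split off the degenerate case where $g$ is even. Write $f(t)=a_2t^2+a_0$ and $g(t)=b_2t^2+b_1t+b_0$ with $a_2b_2\neq 0$ (the shape forced by \eqref{fg} with $m=n=2$), and let $F_0,F_1,G_0,G_1$ be the numerators of the coefficients of $f^3+1\bmod g$ and $g^3+1\bmod f$ introduced in the set-up of this section, whose explicit forms are recorded in Appendix A. Then $(f,g)$ solves \eqref{main} with $\deg f=\deg g=2$ precisely when $F_0=F_1=G_0=G_1=0$, so the task is to list all rational zeros of this system and discard the trivial ones.

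First I would dispose of the case $b_1=0$. Then $f$, and hence $f^3+1$, is even, so its remainder modulo the even polynomial $g$ is even; by uniqueness of polynomial division this remainder has no linear term, i.e.\ $F_1\equiv 0$, and symmetrically $G_1\equiv 0$. Moreover $F_0$ and $G_0$ each factor as a product of two polynomials of low degree, $F_0=P_1P_2$ and $G_0=Q_1Q_2$, so it suffices to solve the four systems $P_i=Q_j=0$, $i,j\in\{1,2\}$. Three of them are handled by a one-variable elimination and reduce either to $a_2^2-a_2b_2+b_2^2=0$ or to $(a_0+b_0)^2-(a_0+b_0)+1=0$; since $X^2-X+1$ has no rational (indeed no real) root, these admit no solution with $a_2b_2\neq 0$. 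The remaining system $P_1=Q_1=0$ yields $a_0=-b_0-1$ and $b_2=-a_2$, which is exactly the first pair of \eqref{m1} with $t$ replaced by $a_2t^2-b_0-1$, hence trivial.

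For the main case $b_1\neq 0$ I would compute a Gr\"obner basis $G(2,2)$ of the ideal generated by $\{F_0,F_1/b_1,G_0,G_1/b_1\}$ with respect to the lexicographic order $b_2<b_1<b_0<a_2<a_0$ fixed in this section (dividing $F_1,G_1$ by the factor $b_1$ is legitimate here and removes the spurious component $b_1=0$). The basis contains an element of the form $(8a_0-3)(64a_0^2+24a_0+9)a_2^5$; as $a_2\neq 0$ and $64a_0^2+24a_0+9$ has negative discriminant, this forces $a_0=3/8$, and an analogous element forces $b_0=7/8$. Two further basis elements, once $a_0,b_0$ are substituted, give $a_2=b_1^2/2$ and then $b_2=b_1^2/2$. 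One then checks that this assignment annihilates every generator of the ideal (not just the triangular subsystem extracted), so it captures all solutions with $b_1\neq 0$; writing out $f$ and $g$ and applying the affine substitution $t\mapsto(2t-1)/2b_1$ produces \eqref{sol22}. It remains to certify that \eqref{sol22} is non-trivial: it cannot be a composite with a polynomial of degree $\geq 2$ (that would force $g$ to be an affine function of $f$, contradicting $g(t)-f(t)=t$), and its leading coefficient $1/2$ satisfies condition (b). Together with the $b_1=0$ analysis this gives the theorem.

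The main obstacle is computational rather than conceptual: the ideal for $b_1\neq 0$ is sizeable, so the Gr\"obner basis computation is heavy, and some care is needed to pick out of its $25$ elements a triangular subset determining $a_0,b_0,a_2,b_2$ and then to verify that the resulting candidate really solves the full system $S(2,2)$. The genuinely mathematical input is minimal --- essentially the irreducibility of $X^2-X+1$ over $\Q$ and the bookkeeping that certifies non-triviality of \eqref{sol22}.
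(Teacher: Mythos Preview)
Your proposal is correct and follows essentially the same route as the paper: the same split into $b_1=0$ versus $b_1\neq 0$, the same factorisations $F_0=P_1P_2$, $G_0=Q_1Q_2$ in the first case with the four subsystems collapsing to $X^2-X+1=0$ or to the trivial composite, and the same Gr\"obner basis of $\{F_0,F_1/b_1,G_0,G_1/b_1\}$ in the second case with the same triangular extraction $a_0=3/8$, $b_0=7/8$, $a_2=b_2=b_1^2/2$. Your parity explanation for $F_1\equiv G_1\equiv 0$ when $b_1=0$ and your explicit non-triviality check for \eqref{sol22} are small clarifications the paper leaves implicit, but otherwise the two arguments coincide.
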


Let us also note

\begin{cor}
The only (non-trivial) solution of the congruence $f^3+g^3+8\equiv 0\pmod{fg}$ with $\op{deg}f=\op{deg}g=2$ is of the form $f(t)=t^2-t+1, g(t)=f(-t)$.
\end{cor}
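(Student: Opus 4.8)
The plan is to reduce the statement directly to Theorem \ref{22case} by a change of scale that turns the constant term $8$ into $1$. Given any pair $f,g\in\Q[t]$ with $\deg f=\deg g=2$ satisfying $f^3+g^3+8\equiv 0\pmod{fg}$, I would set $F=f/2$ and $G=g/2$. Then $\deg F=\deg G=2$, and from the identities $f^3+g^3+8=8(F^3+G^3+1)$ and $fg=4FG$, together with the fact that divisibility in $\Q[t]$ is unaffected by nonzero constant factors, it follows that $FG\mid F^3+G^3+1$, that is, $(F,G)$ is a solution of the system (\ref{main}) in degrees $(2,2)$. The reverse implication is the same computation read backwards: doubling any degree-$(2,2)$ solution of (\ref{main}) produces a degree-$(2,2)$ solution of the congruence $f^3+g^3+8\equiv 0\pmod{fg}$.

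The only thing that then needs checking is that the halving/doubling correspondence matches the two notions of \emph{non-trivial} solution. Condition (a) is plainly invariant under multiplication by a nonzero constant, since a decomposition $F=F_1\circ h$ with $\deg h\ge 2$ yields $f=(2F_1)\circ h$ and conversely. For condition (b) one only has to look at the representatives actually involved: the leading coefficient of $f(t)=t^2-t+1$ is $1$, which is free of squares, and the leading coefficient of $F(t)=\tfrac12(t^2-t+1)$ is $\tfrac12$, for which $|\nu_2(\tfrac12)|=1<2=\deg F$; and the remaining re-parametrization freedom $t\mapsto pt+q$ with $p\in\Q^\ast$, which Theorem \ref{22case} already absorbs, is equally harmless for the shifted congruence. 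Thus a non-trivial degree-$(2,2)$ solution of $f^3+g^3+8\equiv 0\pmod{fg}$ corresponds to a non-trivial degree-$(2,2)$ solution of (\ref{main}).

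By Theorem \ref{22case} the latter must be the pair (\ref{sol22}), namely $F(t)=\tfrac12(t^2-t+1)$, $G(t)=\tfrac12(t^2+t+1)$; multiplying by $2$ gives $f(t)=t^2-t+1$ and $g(t)=t^2+t+1=f(-t)$. That this pair really is a solution is just the doubled form of (\ref{sol22}): one has $f^3+g^3+8=8(F^3+G^3+1)=8\cdot FG\cdot(t^2+5)=fg\cdot 2(t^2+5)$. I do not expect any genuine obstacle here beyond the bookkeeping around the word ``non-trivial''; the single point deserving a careful line is the transfer of the $p$-adic valuation condition (b) across the factor of $2$.
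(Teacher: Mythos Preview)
Your proposal is correct and is exactly the argument the paper intends: the corollary is stated without proof, immediately after Theorem \ref{22case}, with only the remark that ``non-trivial'' is to be understood in the same sense; the scaling $F=f/2,\ G=g/2$ (using that divisibility in $\Q[t]$ ignores nonzero constants) is the evident bridge, and your verification of conditions (a) and (b) for the explicit pair is precisely the bookkeeping needed.
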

In the above the notion of a non-trivial solution of the congruence $f^3+g^3+8\equiv 0\pmod{fg}$ is understood in exactly the same way as in the case of the congruence $f^3+g^3+1\equiv 0\pmod{fg}$.

\subsection{The case $\deg f=2, \deg g=3$}\label{sub23}

We are interested in solving the system (\ref{main}) in polynomials $f, g\in\Q[t]$, with $\op{deg}f=2, \op{deg}g=3$.  Equivalently, we are interested in the rational solutions of the system $S(2,3)$.

We computed the Gr\"{o}bner basis, say $G(2,3)$, of the set of polynomials defining the system $S(2,3)$. The set $G(2,3)$ contains the polynomials
$H_{i}, i=1, \ldots, 451$. We have
\begin{align*}
H_{1}=&b_2 b_3^4(8 b_2^3+b_3^2)(64 b_2^6-280 b_3^2 b_2^3-243 b_3^4)(64 b_2^6-8 b_3^2 b_2^3+b_3^4)\times\\
      &(4096 b_2^{12}+17920 b_3^2 b_2^9+93952 b_3^4 b_2^6-68040 b_3^6 b_2^3+59049 b_3^8),\\
H_{2}=&b_3^4(8 b_2^3+b_3^2)(64 b_2^6-8 b_3^2 b_2^3+b_3^4)(512 b_2^{11}-96218 b_3^6 b_2^2+42939 b_1 b_3^7),\\
H_{5}=&b_3^4(-23396352 b_2^{13}+35946496 b_1 b_3 b_2^{11}+43985895 b_3^6 b_2^4+\\
      &\quad -11830222 b_1 b_3^7 b_2^2+1190043 b_1^2 b_3^8).
\end{align*}
We consider two cases: $b_{2}\neq 0$ or $b_{2}=0$.

Let $b_{2}\neq 0$. We are interested in rational solutions of $H_{1}=H_{2}=H_{5}=0$ (together with the condition $b_{3}\neq 0$), from the first two equations we get that $8 b_2^3+b_3^2=0$. This is equivalent with $b_{2}=-2 u^2,\quad b_{3}=8u^3$ for some $u\in\Q$. However, substituting the computed values in the equation $H_{5}=0$, we left with the equation $C_{0} u^{36} \left(-20 b_1 u+4 b_1^2+37 u^2\right)=0$, where $C_{0}$ is a non-zero rationals. This equation has a rational solution only for $u=0$. We get a contradiction because $b_{3}\neq 0$.

If $b_{2}=0$, from the equation $H_{2}=0$ we get that $b_{1}=0$. Next, using the computed values we get that
$$
H_{53}=C_{1}(b_0+1)(b_0^2-b_0+1)b_3^8,\quad H_{120}=C_{2}b_3^5(b_0 b_3^2-a_2^3),\quad H_{261}=C_{3} a_0 a_2 b_3^6,
$$
where $C_{1}, C_{2}, C_{3}$ are non-zero rationals. We get that
$$
b_{0}=-1,\quad b_{3}=u^3,\quad a_{2}=-u^2, \quad a_{0}=0,
$$
for some $u\in\Q$. The solution obtained in this way is $f(t)=-t^2 u^2, g(t)=-1 + t^3 u^3$. After the reparametrization $t\rightarrow t/u$, it takes the form
\begin{equation}\label{sol23first}
f(t)=-t^2,\quad g(t)=t^3-1.
\end{equation}
We also have $\lambda(t)=-t(t^3-3)$.

We thus proved

\begin{thm}\label{23case}
The only non-trivial solution of the system {\rm(\ref{main})} with $\op{deg}f=2, \op{deg}g=3$ is the one given by {\rm (\ref{sol23first})}.
\end{thm}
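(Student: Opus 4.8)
\emph{Proof plan.} I would argue exactly along the lines described at the start of this section. Write $f(t)=a_2t^2+a_0$ and $g(t)=b_3t^3+b_2t^2+b_1t+b_0$, as prescribed by (\ref{fg}), with $a_2b_3\neq 0$, and consider the system $S(2,3)$ formed by the five numerators $F_0,F_1,F_2$ of the coefficients of $f^3+1\pmod{g}$ together with $G_0,G_1$ of the coefficients of $g^3+1\pmod{f}$ (these polynomials are written out in Appendix A). By the equivalence recalled in Section \ref{sec2} it is enough to determine all rational points of $S(2,3)$ and to discard the trivial ones. So the first step is to compute the Gr\"obner basis $G(2,3)$ of the ideal $(F_0,F_1,F_2,G_0,G_1)\subset\Q[a_0,a_2,b_0,b_1,b_2,b_3]$ with respect to the lexicographic order fixed above; it consists of $451$ polynomials $H_1,\ldots,H_{451}$, and only the six $H_1,H_2,H_5,H_{53},H_{120},H_{261}$ are actually needed. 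The analysis then splits according to whether $b_2$ vanishes.

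\emph{The case $b_2\neq 0$.} Since $b_2b_3^4$ is a unit, $H_1=H_2=0$ forces one of the displayed factors of each to vanish. The factor $64b_2^6-8b_3^2b_2^3+b_3^4$, common to both, is a quadratic in $b_2^3$ with negative discriminant, hence nonzero for $b_3\neq 0$; every remaining factor other than $8b_2^3+b_3^2$ becomes, after putting $w=b_2^3/b_3^2\in\Q$, a polynomial in $w$ (a quadratic, a quartic, or a factor still involving $b_1$) with no rational root --- by a discriminant computation for the quadratic and the rational root test for the quartic. Therefore $8b_2^3+b_3^2=0$, i.e.\ $b_2=-2u^2$, $b_3=8u^3$ for some $u\in\Q$ (the sign of $b_3$ is absorbed by $u\mapsto-u$). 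Plugging this into $H_5$ reduces it, up to a nonzero constant and a power of $u$, to $4b_1^2-20b_1u+37u^2=0$; as $b_3=8u^3\neq 0$ we have $u\neq 0$, and this quadratic in $b_1$ has discriminant $-192u^2<0$ --- a contradiction. Hence $b_2=0$.

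\emph{The case $b_2=0$.} Then $H_2$ collapses to a nonzero rational multiple of $b_1b_3^{17}$, so $b_1=0$ and $g(t)=b_3t^3+b_0$. Now $H_{53}$ is a nonzero rational multiple of $(b_0+1)(b_0^2-b_0+1)b_3^8$; since $b_0^2-b_0+1$ has no rational root, $b_0=-1$. Next, $H_{120}$ is a nonzero rational multiple of $b_3^5(b_0b_3^2-a_2^3)=-b_3^5(b_3^2+a_2^3)$, so $a_2^3=-b_3^2$, and writing $u=-b_3/a_2$ gives $u^2=-a_2$, $u^3=b_3$, that is $a_2=-u^2$, $b_3=u^3$. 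Finally $H_{261}$ is a nonzero rational multiple of $a_0a_2b_3^6$, so $a_0=0$. Thus $f(t)=-u^2t^2$, $g(t)=u^3t^3-1$, and after $t\mapsto t/u$ we obtain (\ref{sol23first}). It remains to check directly that $f(t)=-t^2$, $g(t)=t^3-1$ satisfies the whole system $S(2,3)$ --- equivalently that $f^3+1=-(t^3+1)g$ and $g^3+1=-t(t^6-3t^3+3)f$, whence $\lambda(t)=-t(t^3-3)$ --- and that this pair is non-trivial: condition (b) holds because the leading coefficient $-1$ is free of squares, and condition (a) holds because a common $h$ with $\deg h\geq 2$ would have $\deg h$ dividing both $\deg f=2$ and $\deg g=3$, forcing $\deg h=1$.

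\emph{Main obstacle.} The real difficulty is computational and organizational rather than conceptual: the ideal is large, its Gr\"obner basis has hundreds of generators, and the work lies in isolating the small ``triangular'' subsystem $\{H_1,H_2,H_5,H_{53},H_{120},H_{261}\}$ in the variables $a_0,a_2,b_0,b_1,b_2,b_3$ and in carrying out the elimination carefully enough that a machine can finish it. One must also be careful that the case split on the irreducible factors of $H_1$ and $H_2$ is exhaustive --- several branches die only because the coefficients are required to be \emph{rational} (a non-square discriminant, a failed rational root test, an indefinite binary form), so the argument genuinely uses that the ground field is $\Q$ and not just a splitting field. Finally, since the candidate was extracted from a sub-collection of the $H_i$, one must confirm it solves all of $S(2,3)$; this is the routine verification indicated in the previous paragraph.
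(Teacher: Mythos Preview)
Your argument follows the paper's proof essentially line by line: compute the Gr\"obner basis $G(2,3)$, use $H_1,H_2,H_5$ to dispose of the case $b_2\neq 0$, and then $H_{53},H_{120},H_{261}$ to extract the unique solution when $b_2=0$. One small slip: the third factor of $H_2$ is \emph{linear} in $b_1$ and so certainly can vanish over $\Q$ --- but this is harmless, since the vanishing of $H_1$ alone (whose remaining nontrivial factors are the quadratic and the quartic in $w=b_2^3/b_3^2$, neither with a rational root) already forces $8b_2^3+b_3^2=0$.
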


\subsection{The case $\deg f=2, \deg g=4$}\label{sub24}

In this section we look for polynomial solutions of the system $S(2,4)$. Let us note that the set of rational solutions of the system $S(2,4)$ is the set theoretic sum of the set of rational solutions of the systems $S_{1}(2,4)$ and $S_{2}(2,4)$, where
\begin{align*}
S_{1}(2,4):\;F_{0}=F_{1}=F_{2}=G_{0}=G_{1,1}=0,\\
S_{2}(2,4):\;F_{0}=F_{1}=F_{2}=G_{0}=G_{1,2}=0,
\end{align*}
where $G_{1,1}=a_2b_1-a_0b_3$ and  $G_{1,2}=G_{1}/G_{1,1}$.

Let $G_{i}(2,4)$ be the Gr\"{o}bner basis of the set of polynomials defining the systems $S_{i}(2,4)$ for $i=1, 2$.

We consider the case $i=1$. Then $G_{1}(2,4)$ contains polynomials $H_{i}, i=1, \ldots, 110$. We have $H_{1}=b_3 b_4^3, H_{2}=b_1 b_4^3$ and
\begin{align*}
H_{4}=&b_4^4(b_2^2-4 b_0 b_4-3 b_4)(b_2^4-8 b_0 b_4 b_2^2+3 b_4 b_2^2+16 b_0^2 b_4^2-12 b_0 b_4^2+9 b_4^2),\\
H_{63}=&b_4^2(a_2^2+b_4)(-a_2^2 b_4+a_2^4+b_4^2),\\
H_{72}=&9(2 a_0 b_4-a_2b_2)b_4^4-(b_4 b_2^4-b_3^2 b_2^3-8 b_0 b_4^2 b_2^2+16 b_0^2 b_4^3)a_2^4.
\end{align*}
The unique rational solution of the system $H_{1}=H_{2}=H_{4}=H_{63}=H_{72}$ with the respect to the variables $b_{0}, b_{1}, b_{2}, b_{3}, b_{4}$ is
$$
b_{0}=-a_0^2+a_0-1,\quad b_{1}=0,\quad b_{2}=(1-2 a_0) a_2,\quad  b_{3}=0, \quad b_{4}=-a_{2}^2.
$$
However, with $b_{i}, i=0, \ldots, 4$ chosen in this way we get that $g(t)=-(a_2t^2+a_0)^2+(a_2t^2+a_0)-1$, i.e., trivial solution coming from
the second solution in (\ref{m1}) with $t$ replaced by $a_2t^2+a_0$.

We consider the case $i=2$ now. Then $G_{2}(2,4)$ contains polynomials $H_{i}, i=1, \ldots, 479$. We have
\begin{align*}
H_{1}=&b_4^3 \left(b_3^4-4 b_4^3\right) \left(b_3^8+4 b_4^3 b_3^4+16 b_4^6\right)&,\\
H_{2}=&-b_4^4 \left(9 b_3^2-8 b_2 b_4\right),\\
H_{11}=&-b_4^3 \left(11 b_3^3-16 b_1 b_4^2\right),\\
H_{28}=&-b_4^3 \left(77 b_3^4-256 b_0 b_4^3\right),\\
H_{163}=&-b_4^3 \left(8 b_2^2 b_3^2-81 a_2^3 b_4\right).
\end{align*}
From the equation $H_{1}=0$ we get that $b_{3}=4u^3, b_{4}=4u^4$ for some $u\in\Q\setminus\{0\}$. Then, one can easily solve the system $H_{2}=H_{11}=H_{28}=H_{163}=0$ and we get
$$
b_{0}=\frac{77}{64},\quad b_{1}=\frac{11 u}{4},\quad b_{2}=\frac{9 u^2}{2},\quad a_{2}=2u^2.
$$
With $b_{i}$ and $a_{2}$ chosen in this way the equation $H_{254}=0$ takes the form $Cu^{20} (-3 + 8 a_{0})=0$, where $C\in\Q\setminus\{0\}$, and thus $a_{0}=3/8$.
Finally, if we replace $t$ by $(2t-1)/4u$ we get a non-trivial solution
\begin{equation}\label{sol24}
f(t)=\frac{1}{2}(t^2-t+1),\quad g(t)=\frac{1}{4}(t^2+t+1)(t^2-t+3).
\end{equation}
We also have $\lambda(t)=\frac{1}{8} \left(t^6+t^5+6 t^4+9 t^3+18 t^2+21 t+33\right)$.

We thus proved the following

\begin{thm}\label{24case}
The only non-trivial solution of the system {\rm(\ref{main})} with $\op{deg}f=2, \op{deg}g=4$ is the one given by {\rm (\ref{sol24})}.
\end{thm}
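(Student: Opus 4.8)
The plan is to mirror the strategy already used for Theorems \ref{22case} and \ref{23case}: reduce to the solvability of the polynomial system $S(2,4)$ and eliminate variables with a lexicographic Gr\"{o}bner basis. The first step is to split the problem. The polynomial $G_1$ appearing in $S(2,4)$ has the factor $G_{1,1}=a_2b_1-a_0b_3$, so the rational points of $S(2,4)$ form the union of those of $S_1(2,4)$ (obtained by adjoining $G_{1,1}=0$) and of $S_2(2,4)$ (obtained by adjoining the cofactor $G_{1,2}=G_1/G_{1,1}=0$). Treating the two systems separately keeps the ideals manageable and isolates the trivial family arising from the second line of (\ref{m1}).

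First I would analyze $S_1(2,4)$. After computing the Gr\"{o}bner basis $G_1(2,4)$ one extracts the low-index, essentially triangular generators --- for instance $H_1=b_3b_4^3$, $H_2=b_1b_4^3$, together with $H_4$, $H_{63}$, $H_{72}$ --- and, using $b_4\ne0$, forces $b_1=b_3=0$ and then solves in turn for $b_2$, $b_0$, $b_4$ as polynomials in $a_0,a_2$. The values so obtained make $g(t)=-(a_2t^2+a_0)^2+(a_2t^2+a_0)-1$, i.e.\ $g=g_1\circ h$ with $h(t)=a_2t^2+a_0$ of degree $2$ and $(t,g_1)$ the second pair of (\ref{m1}); by condition (a) this branch yields no non-trivial solution.

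Next I would analyze $S_2(2,4)$, where the basis $G_2(2,4)$ is considerably larger. Its first generator $H_1=b_4^3(b_3^4-4b_4^3)(b_3^8+4b_4^3b_3^4+16b_4^6)$ is again nearly univariate: since $b_4\ne0$ and the last factor has no real zeros, one must have $b_3^4=4b_4^3$, which parametrizes rationally as $b_3=4u^3$, $b_4=4u^4$ with $u\in\Q\setminus\{0\}$. Substituting and using $H_2$, $H_{11}$, $H_{28}$, $H_{163}$ one solves linearly for $b_2,b_1,b_0$ and for $a_2$, getting $b_0=77/64$, $b_1=11u/4$, $b_2=9u^2/2$, $a_2=2u^2$. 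It then remains to pin down $a_0$: after this substitution the generator $H_{254}$ collapses to $Cu^{20}(8a_0-3)=0$ with $C\in\Q\setminus\{0\}$, hence $a_0=3/8$. One then checks (a routine verification) that these values annihilate every remaining generator of $S_2(2,4)$, and the substitution $t\mapsto(2t-1)/4u$ produces (\ref{sol24}); non-triviality condition (b) holds because the leading coefficient of $f$ equals $1/2$.

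The main obstacle is computational rather than structural: the ideal attached to $S_2(2,4)$ is big and its lexicographic Gr\"{o}bner basis is expensive to compute, so the real effort lies in choosing the variable order (the one displayed just before Section \ref{subcubic1}) so that a short, nearly-triangular subset of the basis already determines all coordinates, and then in verifying that the single candidate solution lies on the whole variety so that no component of $S_2(2,4)$ is overlooked.
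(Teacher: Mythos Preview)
Your proposal is correct and follows essentially the same approach as the paper: the same splitting of $S(2,4)$ via the factorization $G_1=G_{1,1}G_{1,2}$, the same triangular extraction from the two Gr\"{o}bner bases (using precisely $H_1,H_2,H_4,H_{63},H_{72}$ for $S_1(2,4)$ and $H_1,H_2,H_{11},H_{28},H_{163},H_{254}$ for $S_2(2,4)$), and the same reparametrization leading to (\ref{sol24}). Your write-up is in fact slightly more explicit than the paper's in justifying the elimination of the cubic factor of $H_1$ and in noting the final verification step.
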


\subsection{The case $\deg f=2, \deg g=5$}\label{sub25}

We deal with the system $S(2,5)$. First we consider the case $b_{4}=0$. Then the equations $F_{0}=F_{1}=F_{3}=0$ implies that $a_{0}=-1, b_{0}=b_{2}=0$. However, then the equation $G_{0}=0$ reduces to $a_{2}^{7}=0$ - a contradiction.

Let us suppose that $b_{4}\neq 0$. It is easy to see that the (sub)system consisting the equations $F_{i}=0$ for $i=1, 2, 3, 4$ is triangular with respect to the variables $b_{0}, b_{1}, b_{2}, b_{3}$. The required solution is given by
\begin{align*}
b_0=&-\frac{(a_0^3+1)b_5^2}{a_2^3 b_4},\\
b_1=&-\frac{(a_0^3+1)b_5^3}{a_2^3 b_4^2},\\
b_2=&-\frac{b_5^2 \left(a_0^3 b_5^2+3 a_2 a_0^2 b_4^2+b_5^2\right)}{a_2^3 b_4^3},\\
b_3=&-\frac{b_5^3 \left(a_0^3 b_5^2+3 a_2 a_0^2 b_4^2+b_5^2\right)}{a_2^3 b_4^4}
\end{align*}
With $b_{i}, i=0, 1, 2, 3$, we consider the remaining part of the system $S(2,5)$, i.e., $F_{4}=G_{0}=G_{1}=G_{2}=0$. It is easy to compute the Gr\"{o}bner basis, say $G(2,5)$ of the corresponding ideal. The basis contains polynomials $H_{i}, i=1, \ldots, 41$ and we have
\begin{align*}
H_{1}=&(a_0^3+1)^6(4 a_0+3)(16 a_0^2-12 a_0+9) a_2^{13} b_5^{16},\\
H_{5}=&(a_0^3+1)^6b_5^{16}(a_2^5+b_5^2)(-a_2^5 b_5^2+a_2^{10}+b_5^4),\\
H_{9}=&(a_0^3+1)^6a_2^7 b_5^{16}(8 a_0^2 a_2^7+9 b_4 b_5^2).
\end{align*}
If $a_{0}=-1$ then the equation $H_{13}=0$ reduces to $Ca_2^{16} b_4^2 b_5^{12}=0$, where $C\in\Q\setminus\{0\}$. Thus, $b_{4}=0$ and we back to the case we already considered on the beginning.

If $a_{0}\neq -1$, then the unique rational solution of the system $H_{1}=H_{5}=H_{9}=0$ is
$$
a_{0}=-\frac{3}{4},\quad a_{2}=u^{5},\quad b_{4}=-\frac{u^{31}}{2},\quad b_{5}=u^2.
$$
Finally, to get the solution of our initial system $S(2,5)$ we need to take $u=-1$. Tracing back our reasoning we get the obtained solution after the change of variables $t\rightarrow (2t-1)/2$, leads to a non-trivial
solution of the system (\ref{main})
\begin{equation}\label{sol25}
f(t)=-t^2+t-1,\quad g(t)=t(t^4-2 t^3+4 t^2-3 t+3).
\end{equation}
We also have $\lambda(t)=-t^8+3 t^7-8 t^6+11 t^5-15 t^4+10 t^3-8 t^2-1$.

We thus proved

\begin{thm}\label{25case}
The only non-trivial solution of the system {\rm(\ref{main})} with $\op{deg}f=2, \op{deg}g=5$ is the one given by {\rm (\ref{sol25})}.
\end{thm}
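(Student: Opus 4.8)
The plan is to follow the scheme already used for $S(2,2)$, $S(2,3)$ and $S(2,4)$: recast the two divisibility conditions as the algebraic system $S(2,5)$ over $\Q$ and determine its rational points. Write $f(t)=a_2t^2+a_0$ and $g(t)=\sum_{i=0}^{5}b_it^i$ with $a_2b_5\neq 0$; by the reduction at the start of Section~\ref{sec2} it suffices to impose $f^3+1\equiv 0\pmod g$ and $g^3+1\equiv 0\pmod f$, which yields the polynomials $F_0,\dots,F_4$ and $G_0,G_1$ in the coefficients $a_0,a_2,b_0,\dots,b_5$.

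I would first dispose of the degenerate case $b_4=0$: there $F_0=F_1=F_3=0$ force $a_0=-1$ and $b_0=b_2=0$, after which $G_0=0$ collapses to $a_2^7=0$, contradicting $a_2\neq 0$. So assume $b_4\neq 0$. The key structural observation is that the four equations $F_1=F_2=F_3=F_4=0$ are triangular in $b_0,b_1,b_2,b_3$, which lets one solve for these coefficients as explicit rational functions of $a_0,a_2,b_4,b_5$ (the formulas appear above). Substituting them into the remaining equations and clearing denominators (powers of $a_2$ and $b_4$) produces an ideal $I\subset\Q[a_0,a_2,b_4,b_5]$, of which I would compute a lexicographic Gr\"obner basis $G(2,5)$.

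From $G(2,5)$ one reads off, among others, the generators $H_1,H_5,H_9$, each carrying the factor $a_0^3+1$. If $a_0=-1$, a further generator ($H_{13}$) reduces to a nonzero constant times $a_2^{16}b_4^2b_5^{12}$, forcing $b_4=0$ and returning to the case already excluded. If $a_0^3+1\neq 0$, then $H_1=0$ gives $(4a_0+3)(16a_0^2-12a_0+9)=0$, and since the quadratic is irreducible over $\Q$ (negative discriminant) we get $a_0=-\tfrac34$. With $a_0$ fixed, $H_5=0$ and $H_9=0$ confine $(a_2,b_4,b_5)$ to the one-parameter family $a_2=u^5,\ b_4=-u^{31}/2,\ b_5=u^2$ with $u\in\Q^\times$, the extra branches left open by $H_5$ alone (e.g.\ those on $a_2^5+b_5^2=0$) being eliminated by the remaining generators of $G(2,5)$. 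Passing back from the cleared ideal to the original system $S(2,5)$ then forces $u=-1$; undoing the earlier substitutions and applying the affine change $t\mapsto(2t-1)/2$ brings the pair to the form (\ref{sol25}). Finally I would check non-triviality: $f(t)=-t^2+t-1$ has degree $2$ and hence is not of the form $f_1\circ h$ with $\deg h\geq 2$, while its leading coefficient $-1$ trivially satisfies condition (b), so (\ref{sol25}) is a genuine non-trivial solution --- and, by the preceding analysis, the only one.

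The main obstacle is the middle two steps. The basis $G(2,5)$ has forty-one generators, so picking out a workable triangular subchain and --- more delicately --- checking that each factor that gets discarded (the irreducible quadratic in $a_0$, the positive-definite ``norm-form'' $a_2^{10}-a_2^5b_5^2+b_5^4$, and the stray rational points of $a_2^5+b_5^2=0$ satisfying $H_1=H_5=H_9=0$ but not the full system) really contributes nothing admissible is a finite but error-prone case analysis, realistically feasible only with a computer algebra system, precisely as the authors carry it out in Mathematica.
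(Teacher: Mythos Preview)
Your proposal is correct and follows essentially the same approach as the paper's own argument: dispose of $b_4=0$, solve the triangular block of the $F_i$ for $b_0,\dots,b_3$, feed the rest into a lex Gr\"obner basis $G(2,5)$, and read off the unique rational branch from $H_1,H_5,H_9$ (with $H_{13}$ killing $a_0=-1$), landing on the parametrization with $u=-1$ and the reparametrization $t\mapsto(2t-1)/2$. Your added remarks on eliminating the spurious factors of $H_5$ and on verifying non-triviality are welcome refinements but do not change the route.
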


\subsection{The case $\deg f=2, \deg g=6$}\label{sub26}

This case is easy. Indeed, if $g\in\Q[t]$ is of degree 6, then the condition $f(t)^3+1\equiv 0\pmod*{g(t)}$ implies the equality $f(t)^3+1=cg(t)$ for some $c\in\Q\setminus\{0\}$. Because $f(t)=a_{2}t^2+a_{0}$, thus $b_{1}=b_{3}=b_{5}=0$ and comparing coefficients on both sides of the equality $f(t)^3+1=cg(t)$ we get that
$$
b_{0}=\frac{a_0^3+1}{c},\quad b_{2}=\frac{3 a_0^2 a_2}{c},\quad b_{4}=\frac{3 a_0a_2^2}{c},\quad b_{6}=\frac{a_2^3}{c}.
$$

The condition $g(t)^3+1\equiv 0\pmod*{f(t)}$ implies that $c=-1$. A quick computation reveals that $g(t)=-f(t)^3-1$. However, this is trivial solution coming form the third solution in (\ref{m1}) with $t$ replaced by $f(t)$.

\subsection{Remark concerning co-prime solutions}\label{subrem}

Let us also note that each non-trivial solution $f, g$ of (\ref{maincong}) can be extended to an infinite family. Indeed, if the pair $(f, g)$ solves (\ref{maincong}) and we define
$$
f_{1}=f(t), g_{1}=g(t),\quad f_{n}=g_{n-1},\quad g_{n}=\frac{g_{n-1}^3+1}{f_{n-1}},
$$
then the pair of polynomials $(f_{n}, g_{n})$ also solves (\ref{maincong}). Moreover, it is also clear that in some senses the most interesting solutions of the congruence (\ref{maincong}) are those satisfying the condition $(\deg f, \deg g)=1$. Indeed, if additionally $3\nmid \deg f$ then
the corresponding sequence of polynomials generated by the algorithm above produces solutions with co-prime degrees. In this direction one can ask
the following

\begin{ques}
Does there exists infinitely many non-trivial solutions $(F, G)$ of (\ref{maincong}) such that there is no solution $(f, g)$ of (\ref{maincong}) and that for each $n \geq 2$ we have $(F,G)\neq (f_{n}, g_{n})$? In other words, does the congruence (\ref{maincong}) has infinitely many disjoint orbits of non-trivial solutions?
\end{ques}

\section{Quartic case}\label{sec3}
In this section we consider the system of congruences
\begin{equation}\label{Qmod}
f^4+4\equiv 0\pmod{g},\quad 4-4g^2\equiv 0\pmod{f}.
\end{equation}
Let $m=\deg f$ and $n=\deg g.$ We have that $m\leq 2n$ and $n\leq 4m.$
Let
$$
\bar{A}=(a_{0}, a_{1}, \ldots, a_{m-2}, a_{m}), \quad \bar{B}=(b_{0},
b_{1}, \ldots, b_{n-1}, b_{n})
$$
be the vectors of variables. We define the $I_{i}=I_{i}(\bar{A},\bar{B}), i=0,\ldots n-1$ and $J_{i}(\bar{A},\bar{B}), i=0,\ldots, m-1$ as the numerators of the coefficients in the remainders of divisibility of $f^4+4\pmod{g}$ and $4-4g^2\pmod{f}$ respectively. More precisely,
\begin{align*}
	(f(t)^4+4) &\pmod*{g(t)}=\sum_{i=0}^{n-1}F_{i}(\bar{A},\bar{B})t^{i},\\
     (4-4g(t)^2)&\pmod*{f(t)}=\sum_{i=0}^{m-1}G_{i}(\bar{A},\bar{B})t^{i}.
\end{align*}
Let us also note that in the sequel we can assume that $a_{m}>0, b_{n}>0$, indeed, if $(f,g)$ is a solution of (\ref{Qmod}) then $(\pm f, \pm g)$ is also a solution.

To determine the pairs of polynomials satisfying the system \eqref{Qmod} we need to study rational solutions of the system
\begin{equation*}
	R(m,n):\;\begin{cases}\begin{array}{cc}
			F_{i}(\bar{A},\bar{B})=0, & \quad i=0,\ldots, n-1, \\
			F_{j}(\bar{A},\bar{B})=0, & \quad j=0,\ldots, m-1.
		\end{array}
	\end{cases}
\end{equation*}

As in the case of cubic case we perform case by case analysis for $\deg f\leq 2$. Moreover, at the end of the section we characterize solutions of
(\ref{Qmod}) under assumption $\deg f=3, \deg g=2$.

All remarks concerning symbolic computations mentioned in cubic case are in order here too.

All the systems considered in this section are presented in the full form in Appendix B. 

\subsection{The case $\deg f=1$}

Without loss o generality we can assume that $f(t)=t$. As a consequence
we see that $g(t)$ is a divisor of
$$
t^4+4=(t^2-2t+2)(t^2+2t+2).
$$
Thus, the polynomial $g$ takes the form: $g(t)=b_{2}(t^2-2t+2)$ or $g(t)=b_{2}(t^2+2t+2)$ or $g(t)=b_{4}(t^4+4)$. A quick inspection reveals
that $b_{2}=\pm \frac{1}{2}$ and $b_{4}=\pm \frac{1}{4}$. Thus, the only non-trivial solutions are
\begin{equation}\label{m1Q}
\begin{array}{ll}
  f(t)=t, & g(t)=\frac{1}{2}(t^2+2t+2), \\
  f(t)=t, & g(t)=\frac{1}{4}(t^2+2t+2)(t^2-2t+2).
\end{array}
\end{equation}
The corresponding values of $L$ are: $L(t)=-2$ and $L(t)=-t^3/4$. Note that if we replace $t$ by $2t$ then the corresponding polynomials have integer coefficients.

\subsection{The case $\deg f=2, \deg g =1$}

Because $\deg (4-4g(t)^2)=2$ we do not need to consider the whole system. It is enough to note that the part corresponding to the condition $4-4g(t)^2\equiv 0\pmod*{f(t)}$ takes the form
$$
G_{0}=4 \left(a_2 b_0^2-a_0 b_1^2-a_2\right), G_{1}=-8b_{0}b_{1}.
$$
Thus $b_{0}=0$ and the vanishing of $f(t)^4+4$ at $t=0$ implies $a_{0}^4+4=0$ - a contradiction.

\subsection{The case $\deg f=2, \deg g =2$}

We consider the system $R(2,2)$. Let $G(2,2)$ be the Gr\"{o}bner basis of
the ideal generated by the polynomials defining the system $R(2,2)$. The set $G(2,2)$ contains polynomials $H_{i}, i=1, \ldots, 33$. We have
$$
H_{1}=b_1 b_2^7 \left(2 b_1^4+5 b_2^2\right),\quad H_{12}=b_2^4 \left(125 a_2^4+320 b_2^4+512 b_0 b_1^2 b_2^3\right).
$$
Vanishing of $H_{1}$ implies that $b_{1}=0$. Then the equation $H_{12}=0$ is equivalent with $5 b_2^4\left(25 a_2^4+64 b_2^4\right)=0$ and thus $b_{2}=0$ - a contradiction.

Summing up: there is no solutions of the system (\ref{quartic}) with $\deg f=2, \deg g=2$.

\subsection{The case $\deg f=2, \deg g=3$}

We consider the system $R(2,3)$. Let $G(2,3)$ be the Gr\"{o}bner basis of
the ideal generated by the polynomials defining the system $R(2,3)$. The set $G(2,3)$ contains polynomials $H_{i}, i=1, \ldots, 630$. We have
\begin{align*}
H_{1}=&b_2 b_3^6 \left(256 b_2^6-5 b_3^4\right) \left(256 b_2^{18}+15775 b_3^4 b_2^{12}+42400 b_3^8 b_2^6+32000 b_3^{12}\right),\\
H_{23}=&b_3^6 \left(1376 b_2 b_3 b_1^4-1312 b_2^3 b_1^3+9500 b_2 b_3^2 b_1+3820 b_0 b_3^3-10885 b_2^3 b_3\right)
\end{align*}
The unique rational solution of the system $H_{1}=H_{23}=0$ with respect to $b_{2}, b_{0}$ is $b_{0}=b_{2}=0$. Then, with $b_{0}, b_{2}$ chosen in this way we get that the equation $H_{22}=0$ reduces to
$$
1343488 b_3^7 \left(b_1^3+10 b_3\right) \left(2 b_1^6+10 b_3 b_1^3+25 b_3^2\right)=0
$$
and thus $b_{3}=-b_{1}^{3}/10$. Consequently, the equation $H_{201}=0$ reduces to
$$
Cb_1^{21} \left(-10 a_2 b_1^2+25 a_2^2+2 b_1^4\right) \left(10 a_2 b_1^2+25 a_2^2+2 b_1^4\right)=0,
$$
where $C\in\Q\setminus\{0\}$. Thus $b_{1}=0$. However, under all these equalities we get that the equation $H_{628}=0$ reduces to $5a_{2}^3=0$ - a contradiction.

Summing up: there is no solutions of the system (\ref{quartic}) with $\deg f=2, \deg g=3$.

\subsection{The case $\deg f=2, \deg g=4$}

In this section we look for polynomial solutions of the system $R(2,4)$. Let us note that the set of rational solutions of the system $R(2,4)$ is the set theoretic sum of the set of rational solutions of the systems $R_{1}(2,4)$ and $R_{2}(2,4)$, where
\begin{align*}
S_{1}(2,4):\;F_{0}=F_{1}=F_{2}=G_{0}=G_{1,1}=0,\\
S_{2}(2,4):\;F_{0}=F_{1}=F_{2}=G_{0}=G_{1,2}=0,
\end{align*}
where $G_{1,1}=a_2 b_1-a_0 b_3$ and  $G_{1,2}=G_{1}/G_{1,1}$.

Let $G_{i}(2,4)$ be the Gr\"{o}bner basis of the set of polynomials defining the systems $S_{i}(2,4)$ for $i=1, 2$.

We consider the case $i=1$. Then $G_{1}(2,4)$ contains polynomials $H_{i}, i=1, \ldots, 588$. We have $H_{1}=b_3 b_4^6, H_{12}=b_1^7 b_4^5$ and
\begin{align*}
H_{15}=&b_4^6(b_2^4-8 b_0 b_4 b_2^2+16 b_0^2 b_4^2-20 b_4^2)(b_2^4-8 b_0 b_4 b_2^2+16 b_0^2 b_4^2-5 b_4^2),\\
H_{395}=&-b_4^2 \left(25 a_2^8 b_3^2-25 a_2^8 b_2 b_4+256 b_2 b_4^5\right).
\end{align*}
The unique rational solution of the system $H_{1}=H_{12}=H_{395}=0$
is $b_{1}=b_{2}=b_{3}=0$. However, with $b_{1}, b_{2}, b_{3}$ chosen in this way, the equation $H_{15}=0$ reduces to $4(4 b_0^2-5)(16 b_0^2-5)b_4^{10}=0$ and thus $b_{4}=0$ - a contradiction.

Let us consider the case $i=2$. The set $G_{2}(2,4)$ contains polynomials $H_{i}, i=1, \ldots, 1092$. We have
$$
H_{1}=b_4^5 \left(b_3^8+540 b_4^6\right) \left(b_3^{16}+25 b_4^{12}\right)
$$
and hence $b_{4}=0$ - a contradiction.

Hence, we do not get a new non-trivial solution of the system (\ref{quartic}) with $\deg f=2, \deg g=4$.

\subsection{The case $\deg f=2, \deg g=5$}

The condition $G_{1}=0$ implies that
\begin{equation}\label{b0b125}
b_{0}=\frac{a_0 a_2 b_2-a_0^2 b_4}{a_2^2}\quad\mbox{or}\quad b_{1}=\frac{a_0 a_2 b_3-a_0^2 b_5}{a_2^2}.
\end{equation}

After the substitution of the expression for $b_{0}$ and necessary simplifications we get the system $R_{1}(2,5)$. Let $G_{1}(2,5)$ be the Gr\"{o}bner basis of the set of polynomials defining $R_{1}(2,5)$ computed with respect to the standard order $b_{5}<b_{4}<b_{3}<b_{2}<b_{1}<a_{2}<a_{0}$. The set $G_{1}(2,5)$ consists of polynomials $H_{i}, i=1,\ldots, 1011$. In particular, we have
\begin{align*}
H_{1}=&b_4 b_5^4 \left(4096 b_4^{30}+3130855 b_5^8 b_4^{20}-8716800 b_5^{16} b_4^{10}+8192000 b_5^{24}\right),\\
H_{34}=&b_5^4 \left(-22 b_2 b_4^3+33 b_3^2 b_4^2-64 b_2 b_3 b_5 b_4+50 b_2^2 b_5^2\right)
\end{align*}
The equation $H_{1}=0$ implies $b_{4}=0$ and then from $H_{34}=0$ we get that $b_{2}=0$. With $b_{2}, b_{4}$ chosen in this way we get the
equation $H_{23}=0$ reduces to
$$
6656 b_5^5 \left(b_3^5+40 b_5^3\right) \left(b_3^{10}+30 b_5^3 b_3^5+156250 b_5^6\right)=0,
$$
We thus get that $b_5=40^3 u^5$ and $b_3=-40^2 u^3$ for some $u\in\Q$. However, substituting the computed values of $b_2, b_{3}, b_{4}, b_{5}$ into the equation $F_{0}=0$ we obtain that it reduces to the equation $2^{36}\cdot 5^{12}\left(a_0^2-2 a_0+2\right) \left(a_0^2+2 a_0+2\right) u^{20}=0$. Thus $u=0$ and this leads to $b_{5}=0$ - a contradiction.

Next, we turn to the second possibility in (\ref{b0b125}). After the substitution of the expression for $b_{1}$ from (\ref{b0b125} and necessary simplifications we get the system $R_{2}(2,5)$. Let $G_{2}(2,5)$ be the Gr\"{o}bner basis of the set of polynomials defining $R_{1}(2,5)$ computed with respect to the standard order $b_{6}<b_{5}<b_{4}<b_{3}<b_{2}<b_{0}<a_{2}<a_{0}$. The set $G_{2}(2,5)$ consists of polynomials $H_{i}, i=1,\ldots, 705$. In particular, we have $H_{1}=b_5^4(4096b_4^{10}-5b_5^8)$.
Hence the vanishing of $H_{1}$ implies that $b_{5}=0$ - a contradiction.

Summing up: there is no solutions of the system (\ref{quartic}) with $\deg f=2, \deg g=5$.

\subsection{The case $\deg f=2, \deg g=6$}

We consider the system $R(2,6)$ and are looking for its rational solutions. The condition $G_{1}=0$ implies that
\begin{equation}\label{b0b126}
b_{0}=\frac{a_0^3 b_6-a_2 a_0^2 b_4+a_2^2 a_0 b_2}{a_2^3} \quad\mbox{or}\quad b_{1}=\frac{a_0 a_2 b_3-a_0^2 b_5}{a_2^2}.
\end{equation}

After the substitution of the expression for $b_{0}$ and necessary simplifications we get the system $R_{1}(2,5)$. Let $G_{1}(2,6)$ be the Gr\"{o}bner basis of the set of polynomials defining $R_{1}(2,6)$ computed with respect to the standard order $b_{6}<b_{5}<b_{4}<b_{3}<b_{2}<b_{1}<a_{2}<a_{0}$. The set $G_{1}(2,6)$ consists of polynomials $H_{i}, i=1,\ldots, 628$.  In particular, we have $H_{1}=b_6^3(b_5^{12}+10 b_6^{10})$. Thus $H_{1}=0$ implies that $b_{6}=0$ - a contradiction.

Next we turn to the second possibility in (\ref{b0b126}). After the substitution of the expression for $b_{1}$ from (\ref{b0b126} and necessary simplifications we get the system $R_{2}(2,6)$. Let $G_{2}(2,6)$ be the Gr\"{o}bner basis of the set of polynomials defining $R_{2}(2,6)$ computed with respect to the standard order $b_{6}<b_{5}<b_{4}<b_{3}<b_{2}<b_{0}<a_{2}<a_{0}$. The set $G_{2}(2,6)$ consists of polynomials $H_{i}, i=1,\ldots, 337$. In particular, we have
\begin{align*}
H_{1}=&b_5 b_6^4,\\
H_{4}=&-b_6^3(b_4 b_5-b_3 b_6),\\
H_{9}=&-b_6^3 \left(b_4^6-9 b_2 b_6 b_4^4+27 b_2^2 b_6^2 b_4^2+10 b_6^4-27 b_2^3 b_6^3\right),\\
H_{96}=&b_6^4 \left(5 a_2^4-8 b_4^2+24 b_2 b_6\right)
\end{align*}
The system $H_{1}=H_{4}=H_{96}=0$ implies that
Solving the system $H_{1}=H_{4}=H_{96}=0$ we get that
$$
b_{3}=b_{5}=0,\quad b_{2}=\frac{8 b_4^2-5 a_2^4}{24 b_6}.
$$
However, with $b_{3}, b_{5}, b_{2}$ chosen in this way, the equation $H_{96}=0$ reduces to the form
$$
-\frac{5}{512}b_6^3 \left(25 a_2^{12}+1024 b_6^4\right)=0.
$$
Hence $b_{6}=0$ - a contradiction.

Summing up: there is no solutions of the system (\ref{quartic}) with $\deg f=2, \deg g=6$.

\subsection{The case $\deg f=2, \deg g=7$}

This case is easy. Indeed, if $g\in\Q[t]$ is of degree 7, then the condition $f(t)^4+4\equiv 0\pmod*{g(t)}$ implies the equality $f(t)^4+4=g(t)L(t)$, where $\deg L=1$. Thus, if $r\in\Q$ is a root of $L(t)=0$, then
$f(r)$ is a rational root of the polynomial $t^4+4$ - a contradiction.

\subsection{The case $\deg f=2, \deg g=8$}

This case is equally easy. Indeed, if $g\in\Q[t]$ is of degree 8, then the condition $f(t)^4+4\equiv 0\pmod*{g(t)}$ implies the equality $f(t)^4+4=cg(t)$ for some $c\in\Q\setminus\{0\}$. Because $f(t)=a_{2}^2+a_{0}$, thus $b_{1}=b_{3}=b_{5}=b_{7}=0$ and comparing coefficients on both sides of the equality $f(t)^4+4=cg(t)$ we get that
$$
b_{0}=\frac{a_0^4+4}{c},\quad b_{2}=\frac{4 a_0^3 a_2}{c},\quad b_{4}=\frac{6 a_0^2 a_2^2}{c},\quad b_{6}=\frac{4 a_0a_2^3}{c},\quad b_{8}=\frac{a_2^4}{c}.
$$
The condition $4-4g(t)^2\equiv 0\pmod*{f(t)}$ implies that $c=\pm 4$. Thus, we get that $g(t)=\pm \frac{1}{2}(f(t)^2+f(t)+2)$. However, this is trivial solution coming form the first solution in (\ref{m1Q}) with $t$
replaced by $f(t)$.

\subsection{Some remarks concerning the case of $\deg f=3$}

From our computations in previous subsections we find that there are no
non-trivial solutions of (\ref{quartic}) in the case of $\deg f=2$. A question arises whether there are any nontrivial solution with $\deg f>2$.
Although the resulting systems of polynomials are more and more complicated, this natural question motivated us to look for new solutions. It is clear that the most natural candidate is the next simplest one, i.e., $\deg f=3$. Unfortunately we were unable to find all solutions of the congruence (\ref{quartic}) in this case but we were able to cover the case $\deg f=3, \deg g=2$. More precisely, we prove the following

\begin{thm}\label{f3g2}
The only solution $f, g\in\Q[t]$ of the congruence (\ref{quartic}) satisfying  $\deg f=3, \deg g=2$ is
$$
f(t)=4t(t^2-t+1),\quad g(t)=2t^2-2t+1.
$$
The corresponding value of $L$ is $L(t)=8 t^7-16 t^6+28 t^5-20 t^4+14 t^3+t+1$.
\end{thm}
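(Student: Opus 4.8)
The plan is to follow the same computational strategy used throughout Section \ref{sec3}: reduce the congruence (\ref{quartic}) with $\deg f = 3$, $\deg g = 2$ to the system $R(3,2)$, compute a Gr\"{o}bner basis of the ideal generated by the polynomials defining it, and extract all rational solutions from the (near-)triangular structure of the basis. Concretely, I would write $f(t) = a_3 t^3 + a_1 t + a_0$ (using the normalization (\ref{fg}), so the $t^2$-coefficient is absent) and $g(t) = b_2 t^2 + b_1 t + b_0$ with $a_3 b_2 \neq 0$, and split the condition $f^4 + 4g^2 \ldots$ — more precisely the two divisibility conditions $f^4 + 4 \equiv 0 \pmod{g}$ and $4 - 4g^2 \equiv 0 \pmod{f}$ as in (\ref{Qmod}). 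The remainder of $4 - 4g^2$ modulo $f$ has degree $\leq 2$, giving three polynomial equations $G_0 = G_1 = G_2 = 0$; the remainder of $f^4 + 4$ modulo $g$ has degree $\leq 1$, giving two more equations $I_0 = I_1 = 0$. So $R(3,2)$ consists of $5$ equations in the $6$ unknowns $a_0, a_1, a_3, b_0, b_1, b_2$ — one more variable than equations, exactly as expected since the $\gcd$ of the degrees is $1$ and the residual $1$-dimensional freedom is the harmless scaling $t \mapsto pt$.

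First I would feed $\{I_0, I_1, G_0, G_1, G_2\}$ (clearing denominators, i.e.\ the appropriate powers of $b_2$ and $a_3$) to a Gr\"{o}bner basis computation under the lexicographic order $b_2 < b_1 < b_0 < a_3 < a_1 < a_0$, as in the rest of the section. I expect the basis to contain a polynomial in the lowest variables alone — something of the shape $b_1 b_2^k\cdot(\text{small factor})$ and then an equation forcing most of $b_0, b_1$ to vanish or be determined — analogous to the patterns $H_1 = b_1 b_2^7(2b_1^4 + 5 b_2^2)$ seen in the $\deg f = \deg g = 2$ quartic case. Since $b_2 \neq 0$, vanishing of the leading $H_i$ should pin down $b_1$ (and possibly $b_0$) as explicit algebraic expressions, after which back-substitution into the remaining basis elements yields a triangular chain determining $a_1, a_0$ and the ratios among $a_3, b_2$. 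I would then read off the unique surviving branch, check it genuinely satisfies all five equations (not merely the basis elements I used), and finally normalize via $t \mapsto pt$ to clear the leading coefficient of $f$ down to being free of cubes, arriving at $f(t) = 4t(t^2 - t + 1)$, $g(t) = 2t^2 - 2t + 1$; the stated $L(t)$ is then obtained by dividing $f^4 - 4g^2 + 4$ by $fg$. One should also dispose of the degenerate sub-cases separately — e.g.\ $b_1 = 0$ forcing $f(0)^4 + 4 = 0$ type contradictions, as happened in the $\deg g = 1$ case — to be sure the main branch is the only one.

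The main obstacle I anticipate is purely computational: the ideal for $R(3,2)$ is larger than those treated earlier (the $\deg f = 2$ cases already produced bases with hundreds of elements), and the Gr\"{o}bner basis under a pure lex order may be expensive or produce very large coefficients. Mitigations would be to compute first under a degree-reverse-lex order to test solvability and locate the relevant small-degree relations, then change order only on the subideal after specializing the easy variables; or to exploit the factored forms $I_0, I_1$ (coming from $t^4 + 4 = (t^2-2t+2)(t^2+2t+2)$) by case-splitting on which quadratic factor $g$ "sees" before running the elimination. A secondary subtlety is making sure that the one remaining dimension of solutions is entirely accounted for by the scaling $t \mapsto pt$, so that the normalized answer is genuinely unique and there is no hidden nontrivial family; this is handled exactly as in the earlier subsections by checking that every non-scaling relation is forced.
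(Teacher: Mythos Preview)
Your plan is correct and would succeed, but the paper takes a more elementary shortcut that avoids a full Gr\"{o}bner basis computation. Rather than feeding all five equations to the machine at once, the paper first isolates the three equations $G_{0}=G_{1}=G_{2}=0$ coming from $4-4g^{2}\equiv 0\pmod{f}$ (note $\deg(4-4g^{2})=4$ and $\deg f=3$, so this subsystem is very small) and solves it \emph{by hand} for $b_{0},a_{0},a_{1}$ in terms of $a_{3},b_{1},b_{2}$; there are exactly two rational branches. Substituting each branch into the two remaining equations $F_{0}=F_{1}=0$ from $f^{4}+4\equiv 0\pmod{g}$ then collapses everything to a pair of equations in $a_{3},b_{1},b_{2}$, from which one reads off $b_{2}=-9b_{1}^{2}/2$ and $a_{3}=\pm 27b_{1}^{3}/2$ directly. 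So your anticipated computational obstacle does not materialise: the system $R(3,2)$ is in fact \emph{smaller} than most of the $\deg f=2$ cases already treated, and the paper exploits this by never invoking a Gr\"{o}bner basis at all. One small correction: the final normalisation is an affine substitution $t\mapsto (6t-2)/(9b_{1})$, not merely a scaling $t\mapsto pt$; this is why the displayed $f$ carries a nonzero $t^{2}$-coefficient despite the working normalisation (\ref{fg}).
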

\begin{proof}
The subsystem of the system $R(3,2)$ corresponding to the condition $4-4g(t)^2\equiv 0\pmod*{f(t)}$ has the form
\begin{align*}
G_{0}=&(a_3 b_0^2-2 a_0 b_1 b_2-a_3=0,\\
G_{1}=&-a_0 b_2^2-2 a_1 b_1 b_2+2 a_3 b_0 b_1=0,\\
G_{2}=&a_3 b_1^2-a_1 b_2^2+2 a_3 b_0 b_2=0.
\end{align*}
It can be easily solved with respect to $b_{0}, a_{0}, a_{1}$. The solutions are
\begin{equation}\label{sol132}
b_{0}=\frac{-2 b_1^2-b_2}{b_2},\quad a_{0}=\frac{2 a_3 b_1 \left(b_1^2+b_2\right)}{b_2^3},\quad a_{1}=\frac{-3 a_3 b_1^2-2a_3 b_2}{b_2^2}
\end{equation}
or
\begin{equation}\label{sol232}
b_{0}=\frac{b_2-2 b_1^2}{b_2},\quad a_{0}=-\frac{2 a_{3}b_{1}\left(b_2-b_1^2\right)}{b_2^3},\quad a_{1}=\frac{2a_3b_2-3 a_3 b_1^2}{b_2^2}.
\end{equation}

Let us consider first (\ref{sol132}). Substituting computed values of $b_{0}, a_{0}, a_{1}$ into the equations coming from the condition $f(t)^4+4\equiv 0\pmod*{g(t)}$ we get the system
$$
F'_{0}=a_3^4 b_2^2+15 a_3^4 b_1^2 b_2+27 a_3^4 b_1^4+4 b_2^8=0,\quad F'_{1}=-3 a_3^4 b_1 b_2^4(9 b_1^2+2 b_2)=0.
$$
The only solutions of interest is
$$
b_{2}=-\frac{9 b_1^2}{2},\quad a_{3}=\pm \frac{27 b_1^3}{2}.
$$
Up to the sign and replacement of $t$ by $(6t-2)/9b_{1}$ we get exactly the solution presented in the statement of our theorem.

Exactly the same solution is obtained in the case (\ref{sol232}) and thus
we omit the simple details.
\end{proof}

We finish with the following

\begin{ques}
Does the congruence (\ref{quartic}) has infinitely many non-trivial solutions in polynomials $f, g\in\Q[t]$?
\end{ques}

We expect that the answer is yes.

\section{Diophantine problems related to cyclic cubic and quartic fields}\label{sec4}

\subsection{A different look on the congruence $f^3+g^3+1\equiv 0\pmod*{fg}$}

In case of cubic case we deal with the congruence $f^3+g^3+1\equiv 0\pmod*{fg}$. As we mentioned in the introduction, there are infinitely many solutions of this congruence (both in integers and in polynomials with integer coefficients). However, our approach was to find solutions without worrying about the corresponding value of $(f^3+g^3+1)/fg$. This motivated us to concentrate on the value of $d$ and asks about integer points on the curve
$$
H_d:\quad x^3+y^3+dxy+1=0.
$$
Let us note that for $d\neq -3$, the curve $H_{d}$ is a genus one curve and thus has only finitely many integer solutions. Thus, for any given $d$, the problem concerning characterization of the set $H_{d}(\Z)$ is interesting and non-trivial. Moreover, the curve $H_{d}$ is the so-called Hessian form of an elliptic curve. One can prove that over some extension of $\Q$, an elliptic curve with torsion point of order 3, is birationally equivalent with $H_{d}$ for some (not necessarily rational) value of $d$.

\begin{thm}
If $(x,y)\in\mathbb{Z}^2$ is a solution of equation $H_d$ for some $d\neq
-3$ and $d_1$ is an integer dividing $d^3+27,$ then we have
\begin{align*}
x&=\frac{3d  d_{1} + 3 \, d_{1}^{2} \pm \sqrt{-27d^{2} d_{1}^{2} - 18 \, d d_{1}^{3} - 3 \, d_{1}^{4} - 12\left(d^{3} + 27\right)d_{1}}}{18d_{1}},\\
y&=\frac{d+d_{1}-3x}{3}.
\end{align*}
\end{thm}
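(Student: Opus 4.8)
The plan is to parametrise an integer point $(x,y)$ on $H_d$ by its elementary symmetric functions $s=x+y$ and $p=xy$, which are integers. First I would substitute $x^3+y^3=s^3-3ps$ into the defining equation to obtain $s^3+1+p(d-3s)=0$, equivalently $p\,(3s-d)=s^3+1$. Since $s\in\mathbb{Z}$ and $d\neq-3$, the factor $3s-d$ cannot vanish: if it did, then $s^3=-1$, so $s=-1$ and $d=3s=-3$, a contradiction. Hence one may set $d_1:=3s-d\in\mathbb{Z}\setminus\{0\}$, so that $s=(d+d_1)/3$ and $p=(s^3+1)/d_1$.

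Next I would verify that this $d_1$ divides $d^3+27$, which is the arithmetic core of the statement. Since $p$ is an integer, $d_1\mid s^3+1$, hence $d_1\mid 27(s^3+1)=(3s)^3+27=(d+d_1)^3+27$. Expanding gives $(d+d_1)^3+27=(d^3+27)+d_1(3d^2+3dd_1+d_1^2)$, and as $d_1$ divides the left-hand side and the last term, it divides $d^3+27$. Thus every integer point of $H_d$ produces, via $d_1=3(x+y)-d$, a divisor of $d^3+27$.

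It remains to recover $x$ and $y$. Being the roots of $T^2-sT+p$, they satisfy $x=\frac12\left(s\pm\sqrt{s^2-4p}\right)$ and $y=s-x=(d+d_1-3x)/3$, which is the asserted formula for $y$. For $x$ I would compute $s^2-4p=s^2-\frac{4(s^3+1)}{d_1}=\frac{d_1s^2-4s^3-4}{d_1}$, then substitute $s=(d+d_1)/3$; a direct expansion yields $d_1s^2-4s^3-4=\frac{1}{27}\left(-4d^3-9d^2d_1-6dd_1^2-d_1^3-108\right)$, so that $s^2-4p=\frac{-4d^3-9d^2d_1-6dd_1^2-d_1^3-108}{27d_1}$. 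Writing $\frac{s}{2}=\frac{d+d_1}{6}=\frac{3dd_1+3d_1^2}{18d_1}$ and $\frac12\sqrt{s^2-4p}=\frac{1}{18d_1}\sqrt{3d_1\left(-4d^3-9d^2d_1-6dd_1^2-d_1^3-108\right)}$, one reassembles $x=\frac12\left(s\pm\sqrt{s^2-4p}\right)$ into precisely the displayed formula, using the identity $3d_1\left(-4d^3-9d^2d_1-6dd_1^2-d_1^3-108\right)=-27d^2d_1^2-18dd_1^3-3d_1^4-12(d^3+27)d_1$.

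I do not expect a serious obstacle here: the argument is a Vieta-type reduction followed by a determined but routine discriminant computation, and the only real care is the bookkeeping when matching the simplified radical against the disguised polynomial under the square root in the statement. The one point worth flagging is the reading of the theorem: what is actually proved is that $d_1:=3(x+y)-d$ is a divisor of $d^3+27$ and that the displayed formulas hold for this $d_1$ with a definite sign; conversely, then, all integer points are obtained by letting $d_1$ range over the divisors of $d^3+27$ and over both signs, which is the effective procedure mentioned in the introduction.
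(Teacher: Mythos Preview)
Your argument is correct and amounts to the same approach as the paper: both identify $d_{1}=3(x+y)-d$ as a divisor of $d^{3}+27$ and then solve the resulting quadratic in $x$. The only cosmetic difference is that the paper obtains the divisibility in one stroke via the identity $27(x^{3}+y^{3}+dxy+1)-(d^{2}+3dx+9x^{2}+3dy-9xy+9y^{2})(3x+3y-d)=d^{3}+27$ (i.e., the factorisation of $a^{3}+b^{3}+c^{3}-3abc$ with $a=3x,\ b=3y,\ c=-d$), whereas you reach it through the symmetric-function substitution and the congruence $27(s^{3}+1)\equiv d^{3}+27\pmod{d_{1}}$.
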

\begin{proof}
We have that
$$
27( x^3+y^3+dxy+1)-(d^{2} + 3 \, d x + 9 \, x^{2} + 3 \, d y - 9 \, x y +
9 \, y^{2})(3x+3y-d)=d^3+27.
$$
Let $d_1$ be a divisor of $d^3+27.$ We obtain that
\begin{eqnarray*}
3x+3y-d&=&d_1,\\
d^{2} + 3 \, d x + 9 \, x^{2} + 3 \, d y - 9 \, x y + 9 \, y^{2}&=&\frac{-d^3-27}{d_1}.
\end{eqnarray*}
The first equation gives the formula for $y,$ that is $y=\frac{1}{3} \,
d + \frac{1}{3} \, d_{1} - x.$ Using this expression we eliminate $y$ from the second equation and the solutions for $x$ are coming from a simple quadratic equation.
\end{proof}

The proof of the theorem provides an algorithm to determine all the integral solutions, we implemented the method in SageMath \cite{sage} and applied it for all values in the range $[-2\cdot 10^6, 2\cdot 10^6]$ except $d=-3.$ The complete list of integral solutions can be downloaded from \url{https://tengely.github.io/Hd.pdf}. We note that for any $d$ we have the trivial solutions $(-1,0),(0,-1).$ Also, if $d$ is a square, let say $d=s^2$ for some integer $s,$ then $(\pm s, -1),(-1,\pm s)$ are solutions. Hence there are many equations having at least 6 solutions (in the given range 333).
There are 2755 cases in the range $[-2\cdot 10^6, 2\cdot 10^6]$ having non-trivial solutions (solutions other than the above mentioned $(-1,0),(0,-1)$). There are 1317 equations with 4 solutions, 1 equation with 5 solutions, 1413 equations with 6 solutions, 13 equations with 8 solutions, 7
equations with 10 solutions, 3 equations with 12 solutions and 1 equation
with 14 solutions. The table containing those $d$ in the considered range such that $X_{d}$ has at least 10 solutions is presented below. The parallel SageMath code (using 3 cores of a Core I5 desktop machine) was running for 3 days and 8 hours.

\begin{equation*}
\begin{array}{|l|l|}
\hline
\hline
d & solutions (x,y)\\
\hline
\hline
-1056 & \left(1063, -1406\right), \left(-1406, 1063\right), \left(38, -201\right), \left(-201, 38\right), \left(7, -86\right), \\
        & \left(-86, 7\right), \left(4, -65\right), \left(-65, 4\right), \left(0, -1\right), \left(-1, 0\right) \\
\hline
25 & \left(27, -19\right), \left(-19, 27\right), \left(5, -1\right), \left(-1, 5\right), \left(0, -1\right), \left(-1, 0\right), \left(-1, -5\right),\\
     &\left(-5, -1\right), \left(-2, -7\right), \left(-7, -2\right), \left(-9, -13\right), \left(-13, -9\right) \\
\hline
49 & \left(19, -7\right), \left(-7, 19\right), \left(7, -1\right), \left(-1, 7\right), \left(0, -1\right), \left(-1, 0\right), \left(-1, -7\right), \\
     &\left(-7, -1\right), \left(-7, -18\right), \left(-18, -7\right) \\
\hline
225 & \left(161, -93\right), \left(-93, 161\right), \left(26, -3\right), \left(-3, 26\right), \left(15, -1\right), \left(-1, 15\right), \\
      &\left(0, -1\right), \left(-1, 0\right), \left(-1, -15\right), \left(-15, -1\right), \left(-63, -109\right), \left(-109, -63\right) \\
\hline
1369 & \left(1159, -733\right), \left(-733, 1159\right), \left(37, -1\right), \left(-1, 37\right), \left(0, -1\right), \left(-1, 0\right), \\
       &\left(-1, -37\right), \left(-37, -1\right), \left(-373, -657\right), \left(-657, -373\right), \left(-437, -691\right),  \\
       &\left(-691, -437\right)\\
\hline
1609 & \left(1267, -772\right), \left(-772, 1267\right), \left(468, -133\right), \left(-133, 468\right), \left(0, -1\right), \left(-1, 0\right), \\
       &\left(-11, -133\right), \left(-133, -11\right), \left(-133, -457\right), \left(-457, -133\right), \left(-331, -693\right),  \\
       &\left(-693, -331\right), \left(-365, -721\right), \left(-721, -365\right) \\
\hline
9801 & \left(7791, -4771\right), \left(-4771, 7791\right), \left(99, -1\right), \left(-1, 99\right), \left(0, -1\right), \left(-1, 0\right), \\
       &\left(-1, -99\right), \left(-99, -1\right), \left(-2989, -4881\right), \left(-4881, -2989\right) \\
\hline
43089 & \left(172333, -158006\right), \left(-158006, 172333\right), \left(6697, -1037\right), \left(-1037, 6697\right),  \\
        &\left(6135, -871\right), \left(-871, 6135\right), \left(0, -1\right), \left(-1, 0\right), \left(-16226, -22583\right),  \\
        &\left(-22583, -16226\right)\\
\hline
66049 & \left(53793, -33361\right), \left(-33361, 53793\right), \left(257, -1\right), \left(-1, 257\right), \left(0, -1\right),  \\
        &\left(-1, 0\right), \left(-1, -257\right), \left(-257, -1\right), \left(-20511, -33073\right), \left(-33073, -20511\right) \\
\hline
212521 & \left(27729, -3610\right), \left(-3610, 27729\right), \left(461, -1\right), \left(-1, 461\right), \left(0, -1\right),  \\
         &\left(-1, 0\right), \left(-1, -461\right), \left(-461, -1\right), \left(-59, -3541\right), \left(-3541, -59\right) \\
\hline
455625 & \left(367667, -226929\right), \left(-226929, 367667\right), \left(675, -1\right), \left(-1, 675\right), \left(0, -1\right), \\
         & \left(-1, 0\right), \left(-1, -675\right), \left(-675, -1\right), \left(-140529, -227683\right),  \\
         &\left(-227683, -140529\right)\\
\hline
\end{array}
\end{equation*}
\begin{center}
Table 1. Values of $d\in[-2\cdot 10^6,2\cdot 10^6]$ such that $H_{d}$ has at least 10 solutions together with the complete set of integer solutions.
\end{center}

\subsection{A genus three curve of Balady and Washington}

Let $T\in\Z$ and consider the curve 
$$
X_T:\quad (x+y)^4-4x^2y^2+Txy(x+y)+4=0.
$$
Balady and Washington showed a clear connection between integer points on $X_{4T}$ with $2T\in\Z$ and cyclic quartic fields generated by the polynomial
\begin{equation}\label{quarpol}
X^4+(2s^3+Ls^2-4ps+2Lp)X^3-(3s^2+3Ls-6p)X^2+2LX+1.
\end{equation}
In fact, each integer point on $X_{4T}$ leads to suitable polynomial of the above form with square discriminant. For most values of $T$, the curve $X_{T}$ represents genus 3 curve. Thus, famous Faltings theorem implies that the set of rational (and thus integer) points on $X_{T}$ is finite. Balady and Washington observed that $X_{-8}$ is a singular curve and were able to prove the existence of infinitely many quartic number fields with cyclic Galois group of order 4 and such that the roots of the defining polynomial (\ref{quarpol}) generate the group of units or a subgroup of index 5. They also noted some sporadic values of $T$ such that $X_{T}$ has non-trivial integer points, i.e., different then $(\pm 1, \mp 1)$. It is an interesting question whether there are infinitely many values of $T\in\Z$ such that on $X_{T}$ we have non-trivial integer points. Similarly, as in the case of the family $H_{d}$ we investigated this question from a computational point of view. More precisely, we have the following result.

\begin{thm}
Let $T$ be an even integer such that $T\neq \pm 8.$ If $(x,y)\in\mathbb{Z}^2$ is a solution of $X_T$ and $d_1$ is a divisor of
$$
\frac{(T^2+64)(T-8)(T+8)}{16},
$$
then we have that
$$
(4x+4y)^2=d_1+\frac{(T^2+64)(T-8)(T+8)}{16d_1}-\frac{T^2}{2}.
$$
\end{thm}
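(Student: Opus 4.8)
The plan is to imitate the proof of the preceding theorem on $H_d$: produce a polynomial identity which, once restricted to the curve $X_T$, forces a product of two integers to equal a fixed nonzero number, and then read off the conclusion from the resulting divisibility.

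First I would pass to symmetric coordinates. Writing $u=x+y$ and $v=xy$, the defining equation of $X_T$ collapses to
\[
u^4-4v^2+Tuv+4=0 .
\]
Next I would introduce the auxiliary integer $w:=8xy-T(x+y)=8v-Tu$ and use the equation above in the form $4v^2=u^4+Tuv+4$ to eliminate $v$; a one-line computation then gives $w^2=16u^4+T^2u^2+64$. With $w$ in hand the key identity reads
\[
\left(8u^2+\tfrac{T^2}{4}-2w\right)\left(8u^2+\tfrac{T^2}{4}+2w\right)=\left(8u^2+\tfrac{T^2}{4}\right)^2-4w^2=\frac{T^4}{16}-256=\frac{(T^2+64)(T-8)(T+8)}{16},
\]
where the second equality substitutes the formula for $w^2$ just obtained and the rest is elementary algebra. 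Denote the common value by $C$. Here the hypothesis $T\neq\pm 8$ ensures $C\neq 0$, while $T$ being even ensures $\tfrac{T^2}{4}\in\Z$, so both factors on the left-hand side are integers.

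Finally I would set $d_1:=8u^2+\tfrac{T^2}{4}-2w\in\Z$. The identity exhibits $d_1$ as a divisor of $C$ whose complementary divisor is $C/d_1=8u^2+\tfrac{T^2}{4}+2w$, and adding the two gives $d_1+C/d_1=16u^2+\tfrac{T^2}{2}$. Hence $(4x+4y)^2=16u^2=d_1+\dfrac{C}{d_1}-\dfrac{T^2}{2}$, which is the assertion; just as $d_1=3x+3y-d$ in the previous theorem, the divisor $d_1$ here is the specific one attached to the point $(x,y)$.

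I do not expect a genuine obstacle: once the auxiliary quantity $w=8xy-T(x+y)$ and the completion-of-square shift by $T^2/4$ have been guessed, the rest is a short computation. The only points that demand care are the integrality and non-degeneracy bookkeeping — namely that $T$ even is exactly what makes $d_1$ an integer, and $T\neq\pm 8$ is exactly what makes $C$ nonzero (consistent with the earlier observation that $X_{-8}$ is singular). Thus the creative step is spotting the identity, but it is strongly signposted by the analogue $27(x^3+y^3+dxy+1)-(\cdots)(3x+3y-d)=d^3+27$ used for $H_d$.
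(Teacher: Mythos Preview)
Your proof is correct and is essentially the same as the paper's: your two factors $8u^2+\tfrac{T^2}{4}\mp 2w$ are, upon expanding $u=x+y$ and $w=8xy-T(x+y)$, exactly the paper's factors $8x^2+8y^2\pm 2T(x+y)+\tfrac{T^2}{4}$ and $8x^2+32xy+8y^2\mp 2T(x+y)+\tfrac{T^2}{4}$ (in reversed order), and both proofs finish by summing the factor and its complement. The only difference is presentational: you derive the identity via the symmetric variables and the auxiliary $w$, whereas the paper simply states the factorization in the original variables and verifies it.
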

\begin{proof}
We have that
\begin{eqnarray*}
&& \left(8x^2+32xy+8y^2-2Tx-2Ty+\frac{T^2}{4}\right)\left(8x^2+8y^2+2Tx+2Ty+\frac{T^2}{4}\right)=\\
&& =\frac{(T^2+64)(T-8)(T+8)}{16}.
\end{eqnarray*}
Since $T$ is even we obtain that
$$
\left(8x^2+32xy+8y^2-2Tx-2Ty+\frac{T^2}{4}\right)=d_1
$$
and
$$
\left(8x^2+8y^2+2Tx+2Ty+\frac{T^2}{4}\right)=\frac{(T^2+64)(T-8)(T+8)}{16d_1},
$$
where $d_1$ divides $\frac{(T^2+64)(T-8)(T+8)}{16}.$ The statement follows by taking the sum of these equations.
\end{proof}

Based on the above theorem we have two strategies to determine the complete set of integral solutions of $X_T.$ If we can factorize the integer
$\frac{(T^2+64)(T-8)(T+8)}{16},$ then we collect together the divisors $d_1$ for which the number
$$d_1+\frac{(T^2+64)(T-8)(T+8)}{16d_1}-\frac{T^2}{2}$$
is a square divisible by 16. After that the problem can be reduced to determine integral roots of univariate polynomials. If the factorization of the integer $\frac{(T^2+64)(T-8)(T+8)}{16}$ is too expensive, then we may
compute the resultant of the polynomials
$$
(x+y)^4-4x^2y^2+Txy(x+y)+4
$$
and
$$
(4x+4y)^2-d_1-\frac{(T^2+64)(T-8)(T+8)}{16d_1}+\frac{T^2}{2}
$$
with respect to $y.$ Let us denote it by $R_{d_1,T}(x).$ This is a degree
8 reducible polynomial that is a product of two degree 4 polynomials. Therefore we can provide bound for $|x|$ as a function of $d_1$ and $T$ and finally we note that we can bound $d_1$ as well.
\begin{cor}\label{C4}
Let $T\neq 8$ be an even integer for which $2\leq T\leq 15\cdot 10^6.$  The Diophantine equation $X_T$ has at least 3 integral solutions in the following cases
\begin{center}
\begin{tabular}{|c|c|}
\hline
$T$ & solutions $(x,y)$ \\
\hline
40 & $(\pm 1, \mp 1), (-1,-5), (-5,-1)$ \\
\hline
154 & $(\pm 1, \mp 1), (-5,37), (37,-5)$ \\
\hline
77236 & $(\pm 1, \mp 1), (-5,629), (629,-5)$ \\
\hline
\end{tabular}.
\end{center}
\end{cor}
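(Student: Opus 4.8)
The plan is to turn the preceding theorem into a completely explicit finite search. Fix an even integer $T$ with $2\le T\le 15\cdot 10^{6}$ and $T\neq 8$, and set
$$
N_{T}=\frac{(T^{2}+64)(T-8)(T+8)}{16},
$$
which is a nonzero integer because $T$ is even. By the theorem, every $(x,y)\in X_{T}(\Z)$ produces a divisor $d_{1}\mid N_{T}$ with
$$
(4x+4y)^{2}=d_{1}+\frac{N_{T}}{d_{1}}-\frac{T^{2}}{2}.
$$
So the first step is, for each such $T$, to run through the finitely many positive divisors $d_{1}$ of $N_{T}$, discard those for which the right-hand side is negative, not a perfect square, or not divisible by $16$, and keep the resulting finite list of admissible values $s:=x+y$, namely $s=\pm\frac{1}{4}\sqrt{d_{1}+N_{T}/d_{1}-T^{2}/2}$.

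For each admissible pair $(d_{1},s)$ with $s\in\Z$, substitute $y=s-x$ into the defining polynomial of $X_{T}$. A direct expansion turns $(x+y)^{4}-4x^{2}y^{2}+Txy(x+y)+4$ into
$$
-4x^{4}+8sx^{3}-(4s^{2}+Ts)x^{2}+Ts^{2}x+(s^{4}+4),
$$
a univariate quartic in $x$ with integer coefficients whose integer roots necessarily divide $s^{4}+4$; one tests these finitely many candidates directly. Collecting the pairs $(x,s-x)$ obtained over all admissible pairs $(d_{1},s)$ yields, by the theorem, the \emph{complete} set $X_{T}(\Z)$, so one does not even need to invoke Faltings' theorem to know it is finite. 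Running this procedure for every admissible $T$ and recording those for which $X_{T}$ has at least three integral solutions produces the table; since $(\pm 1,\mp 1)$ always lie on $X_{T}$, having ``at least three solutions'' is the same as possessing a non-trivial integer point.

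The one genuine obstacle is the factorization of $N_{T}$: for $T$ near $15\cdot 10^{6}$ this integer has about twenty digits and can occasionally be slow to factor. For such $T$ I would use the alternative route sketched after the theorem. Instead of enumerating divisors, form the resultant
$$
R_{d_{1},T}(x)=\op{Res}_{y}\!\left((x+y)^{4}-4x^{2}y^{2}+Txy(x+y)+4,\ (4x+4y)^{2}-d_{1}-\frac{N_{T}}{d_{1}}+\frac{T^{2}}{2}\right),
$$
a polynomial of degree $8$ in $x$ that factors as a product of two quartics; its coefficients give an explicit bound for $|x|$ in terms of $d_{1}$ and $T$, and combining this with the relation $16(x+y)^{2}=d_{1}+N_{T}/d_{1}-T^{2}/2$ and the curve equation bounds $d_{1}$ from above as well. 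Only finitely many pairs $(d_{1},x)$ then survive and have to be tested, so no complete factorization of $N_{T}$ is required. The full search was carried out in SageMath \cite{sage}; it shows that the only even $T$ with $2\le T\le 15\cdot 10^{6}$ and $T\neq 8$ admitting a non-trivial integer point on $X_{T}$ are $T\in\{40,154,77236\}$, with the solution sets exactly as listed, which is the assertion of the corollary.
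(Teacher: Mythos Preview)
Your proposal is correct and follows essentially the same approach as the paper. The paper does not give a separate proof of the corollary; immediately before stating it, it describes exactly the two strategies you spell out (enumerate divisors $d_{1}$ of $N_{T}$, keep those making the right-hand side a square divisible by $16$, then find integer roots of the resulting univariate polynomial; alternatively, when factoring $N_{T}$ is costly, use the resultant $R_{d_{1},T}(x)$ to bound $|x|$ and $d_{1}$), and then reports the outcome of the SageMath computation. Your write-up is in fact more explicit than the paper's, since you exhibit the quartic $-4x^{4}+8sx^{3}-(4s^{2}+Ts)x^{2}+Ts^{2}x+(s^{4}+4)$ and note that its integer roots divide $s^{4}+4$, whereas the paper only says the problem ``can be reduced to determine integral roots of univariate polynomials.'' One small point: you restrict to positive divisors $d_{1}$, but the factorization in the theorem allows $d_{1}<0$ as well; in practice this only matters for the few small $T$ with $N_{T}<0$, and is harmless once handled.
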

In the study of cyclic quartic fields one needs to consider the Diophantine equation $X_{4L},$ where $L$ or $2L$ is an integer. Balady and Washington \cite{BalWas4} considered the special cases $L=-2$ and $L=2$ and they also made a computer search and provided a table containing 5 examples (with $L=\pm 2$ and $L=-77/2$). The solutions in Corollary \ref{C4} yield the following polynomials
\begin{center}
\begin{tabular}{|c|c|c|c|}
\hline
$x$ & $y$ & $L$ & Polynomial \\
\hline
$-1$ & $-5$ & 10 & $t^4 + 148t^3 + 102t^2 + 20t + 1$ \\
\hline
$-5$ & 37 & $77/2$ & $t^4 + 114395t^3 - 7878t^2 + 77t + 1$ \\
\hline
$-5$ & 629 & 19309 & $t^4 + 7890798742t^3 - 37333446t^2 + 38618t + 1$ \\
\hline
\end{tabular}
\end{center}
We note that the example with $L=77/2$ corresponds to the one with $L=-77/2$ in the table of Balady and Washington by $t\mapsto -t.$

Based on the polynomial solutions of the congruences, see the formulas at \eqref{m1Q} and in Theorem 3.1, we determined integral points on $X_{2t^3}$ such that $1\leq t\leq 80000$ and on $X_{8t^7 - 16t^6 + 28t^5 - 20t^4 + 14t^3 + t + 1}$ with  $1\leq t\leq 1001.$ There are only trivial solutions. 

Our computations clearly show that the problem of finding values of $T\neq 8$ such that $X_{T}$ contains non-trivial integer points, is difficult. We thus formulate the following explicit problem.

\begin{prob}
Find more integers $T$ such that $X_{T}$ contains non-trivial integer points. 
\end{prob}

\begin{rem}
{\rm Let us note that each integer point $(x,y)$ on $X_{T}$ for some $T$, leads to the simultaneous congruences
\begin{equation}\label{systemcong}
x^4+4\equiv 0\pmod*{y},\quad y^4+4\equiv 0\pmod*{x},\quad 4-4x^2y^2\equiv 0\pmod*{x+y}.
\end{equation}
However, there are integer solutions of the system (\ref{systemcong}) which do not lead to the solutions of the equation defining $X_{T}$ for any integer $T$, i.e., the value
$$
T=T(x,y)=\frac{(x+y)^4-4x^2y^2+4}{xy(x+y)}
$$
is not an integer. We performed small numerical search for solutions of (\ref{systemcong}) satisfying $x\leq y$ with $|x|\leq 10^6$ and which do not leading to integer value of $T(x,y)$. Let us note that if $(x, y)$ is a solution and $x, y$ are both positive/negative then $(y, x)$ is also a solution. Moreover, if $x<0, y>0$ and $(x, y)$ is a solution of (\ref{systemcong}) with corresponding value of $T$, then $(-y, -x)$ is also a solution with corresponding value $-T$. Thus in our computations we deal only with $y>|x|$. The results of our computations are presented in the table below.
\begin{equation*}
\begin{array}{|l|l|l|l|}
\hline
 x & y & x+y & T(x,y) \\
\hline
 -20 & 26 & 6 & \frac{1385}{4} \\
 -20 & 34 & 14 & \frac{761}{4} \\
 -4 & 10 & 6 & \frac{85}{4} \\
 -2 & 4 & 2 & \frac{59}{4} \\
 2 & 2 & 4 & \frac{49}{4} \\
 2 & 4 & 6 & \frac{87}{4} \\
 2 & 10 & 12 & \frac{319}{4} \\
 4 & 26 & 30 & \frac{983}{4} \\
 10 & 122 & 132 & \frac{7393}{4} \\
 148 & 4322 & 4470 & \frac{556227}{4} \\
 \hline
\end{array}
\end{equation*}
\begin{center}
Table 2. Solutions $(x,y)$ of (\ref{systemcong}) satisfying the conditions: $|x|<10^7, |x|<y$ and $T(x,y)\not\in\Z$.
\end{center}

In the light of our computations we state the following 
\begin{ques}
Does the system (\ref{systemcong}) has infinitely many solutions in integers $x, y$?   
\end{ques}
}
\end{rem}

\bigskip

Let us observe that there is natural map (of degree 2)
$$
\phi:\; X_{T}\ni (x,y)\mapsto (x+y,xy)=(X,Y)\in Q_{T},
$$
where $Q_{T}:\;X^4-4Y^2+TXY+4=0$. Thus, it is also natural to ask about values of $T$ which lead to curves $Q_{T}$ with non-trivial integer points. Similarly, as in the case of the curve $X_{T}$ one can propose an efficient method which for a given value of $T$ allows to find all integer points on $Q_{T}$. In fact, we can work with slightly more general form
$$
Q_{a,b}:\quad x^4-4y^2+axy+b=0.
$$
Note that $Q_{T,4}=Q_{T}$ and recall that $Q_{4a,4}$ were studied in \cite{BalWas4}.

In case of the equation $Q_{a,b}$ we have the following result.
\begin{thm}
Let $a,b$ be integers such that $a^4\neq 1024b.$
Let $g=\gcd(32,8a,a^2).$
If $(x,y)\in\mathbb{Z}^2$ is a solution of $Q_{a,b}$ and $d_1$ is a divisor of
$$
\frac{a^4-1024b}{g^2},
$$
then we have that
$$
64x^2=d_1g+\frac{a^4-1024b}{d_1g}-2a^2.
$$
\end{thm}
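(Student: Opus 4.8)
The plan is to mimic the proofs of the earlier theorems in this section (the one for $H_d$ and the one for $X_T$), all of which rest on an algebraic identity exhibiting a product of two quadratic-type expressions equal to a fixed integer constant depending only on the parameters. First I would look for polynomials $U(x,y)$ and $V(x,y)$ with integer coefficients such that
\begin{equation*}
U(x,y)\cdot V(x,y) = c\,(x^4-4y^2+axy+b) + (a^4-1024b)
\end{equation*}
for some constant $c$, so that on the curve $Q_{a,b}$ we get $U V = a^4 - 1024 b$. The shape of the final conclusion — only $x$ survives, and it appears as $64x^2$ — strongly suggests that $U$ and $V$ differ only in the sign of the terms linear in $y$ (the $y$-dependence cancels when we add them), exactly as in the $X_T$ proof where the two factors were $8x^2+32xy+8y^2\mp 2Tx\mp 2Ty+T^2/4$. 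So I expect $U,V$ to be of the form $\alpha x^2 + \beta y + \gamma xy + \delta x + \epsilon$ with the $\beta y$ and $\gamma xy$ (and possibly $\delta x$) terms having opposite signs in $U$ versus $V$; matching coefficients against $-4y^2$ forces $\beta^2$-type relations, and matching against $axy$ ties $\gamma$ to $a$. The factor-of-$64$ and the $g=\gcd(32,8a,a^2)$ bookkeeping tell me the natural factors have denominators, so I would clear them: writing $U = (\text{something})/g$ etc., the product identity becomes $U V = (a^4-1024b)/g^2$, and $d_1 \mid (a^4-1024b)/g^2$ is then exactly the statement that $d_1$ is one of the two integer factors.

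Concretely, I would set $d_1 g = U$ and $(a^4-1024b)/(d_1 g) = V$ (after the clearing), and then the conclusion $64x^2 = d_1 g + (a^4-1024b)/(d_1 g) - 2a^2$ should fall out by \emph{adding} the two equations: the right-hand side becomes $U+V$, which by the sign-cancellation design equals $2\alpha x^2 + 2\epsilon$ with the constant term $2\epsilon$ matching the $-2a^2$ and $2\alpha$ matching $64$ after accounting for the scaling by $g$. The hypothesis $a^4 \neq 1024 b$ is needed precisely so that $a^4 - 1024 b \neq 0$ and the divisor $d_1$ (hence the factorization into two nonzero integers) makes sense, paralleling the role of $d \neq -3$ in the $H_d$ theorem and $T \neq \pm 8$ in the $X_T$ theorem.

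The main obstacle I anticipate is pinning down the correct identity — i.e., guessing the right pair $(U,V)$ with the right rational coefficients and verifying the polynomial identity. This is where the $g = \gcd(32, 8a, a^2)$ comes from: the naive factors will have a denominator dividing some fixed power of $2$ times powers of $a$, and $g$ is exactly the gcd that makes $d_1 g$ an integer in general. I would proceed by writing $x^4 - 4y^2 + axy + b$ and completing the square in $y$: treating it as $-4\bigl(y - \tfrac{ax}{8}\bigr)^2 + \tfrac{a^2x^2}{16} + x^4 + b$, which suggests the two factors are built from $2\bigl(y - \tfrac{ax}{8}\bigr) \pm (\text{quadratic in } x)$. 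Multiplying $\bigl(2y - \tfrac{ax}{4} + P(x)\bigr)\bigl(-2y + \tfrac{ax}{4} + P(x)\bigr) = P(x)^2 - \bigl(2y - \tfrac{ax}{4}\bigr)^2 = P(x)^2 - 4y^2 + axy - \tfrac{a^2x^2}{16}$, so choosing $P(x)^2 = x^4 + \tfrac{a^2 x^2}{16} + b$ would be ideal but generally $x^4 + \tfrac{a^2x^2}{16} + b$ is not a perfect square in $x$; instead one allows $P$ linear, $P(x) = 8x^2 + (\text{const})$, and absorbs the discrepancy into a multiple of the defining polynomial. I would then clear the common denominator ($16$, coming from the $\tfrac{a^2x^2}{16}$ and $\tfrac{ax}{4}$), track how $g$ enters, and check that adding the two factored equations yields exactly the displayed formula for $64x^2$. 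Once the identity is in hand, the rest is the same short divisor argument used twice above, so I would keep the write-up correspondingly brief.
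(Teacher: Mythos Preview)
Your approach is correct and is essentially the one in the paper. Completing the square in $y$ exactly as you describe and clearing the denominator $1024$ yields the identity
\[
\bigl(32x^{2}-8ax+64y+a^{2}\bigr)\bigl(32x^{2}+8ax-64y+a^{2}\bigr)=1024\bigl(x^{4}-4y^{2}+axy+b\bigr)+a^{4}-1024b,
\]
which is precisely the factorization the paper writes down; since $g=\gcd(32,8a,a^{2})$ divides each of $32,\,8a,\,64,\,a^{2}$, both factors are divisible by $g$, and setting one of them equal to $d_{1}g$ and adding gives $64x^{2}+2a^{2}$ on the right, as you anticipated.
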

\begin{proof}
We have that
$$
\frac{1}{g^2}\left(32x^2-8ax+64y+a^2\right)\left(32x^2+8ax-64y+a^2\right)=\frac{a^4-1024b}{g^2}.
$$
We obtain that
$$
\frac{1}{g}\left(32x^2-8ax+64y+a^2\right)=d_1
$$
and
$$
\frac{1}{g}\left(32x^2+8ax-64y+a^2\right)=\frac{a^4-1024b}{d_1g^2},
$$
where $d_1$ divides $\frac{a^4-1024b}{g^2}.$ The statement follows by taking the sum of these equations.

%
\end{proof}

We determined all solutions of $Q_{4a,4}$ for $3\leq a\leq 5\cdot 10^6$ by using SageMath, the complete list of integral solutions can be downloaded from \url{https://tengely.github.io/Qab.pdf}.
It is interesting to note that among the 552 cases that yield non-trivial
(solutions other than $(0,\pm 1)$) there are 544 with 6 solutions and 8 with 10 solutions. In what follows we list the latter 8 cases.
We also considered the equation $Q_{2a,4}$ for odd values of $a$ such that $3\leq a \leq 5\cdot 10^6,$
since in case of the equation $Q_{4a,4}$ those values are also interesting for which $2a$ is an integer. In the given range there are 111 equations having non-trivial solutions. In all these cases there are 6 integral solutions. The computation was performed on a Core I5 desktop computer and
we used 3 cores parallel in SageMath, it took 23.3 hours to complete. We provide the list of equations that have 10 integral solutions in the table below.

\begin{equation*}
\begin{array}{|l|l|}
\hline
U & \mbox{integral solutions of}\;x^4-4y^2+4axy+4=0 \\
\hline
250 & (0, 1), \left(156, 42485\right), \left(-10, 1\right), \left(-156, 3485\right), \left(10, -1\right), \left(-10, -2501\right), \\
    &\left(0, -1\right),\left(10, 2501\right), \left(156, -3485\right), \left(-156, -42485\right) \\
\hline
691 & \left(0, 1\right), \left(112, 77897\right), \left(-112, 505\right), \left(-24, -16589\right), \left(24, 16589\right),\\
    & \left(112, -505\right), \left(-112, -77897\right), \left(-24, 5\right), \left(0, -1\right), \left(24, -5\right) \\
\hline
6750 & \left(-30, -202501\right), \left(0, 1\right), \left(-2142, 355265\right), \left(30, -1\right), \left(2142, 14813765\right), \\
     & \left(-30, 1\right), \left(-2142, -14813765\right), \left(30, 202501\right), \left(2142, -355265\right), \left(0, -1\right)] \\
\hline
13718 & \left(0, 1\right), \left(-38, 1\right), \left(38, -1\right), \left(7964, -8538133\right), \left(38, 521285\right),\\
      &\left(7964, 117788285\right), \left(-38, -521285\right), \left(0, -1\right), \left(-7964, -117788285\right), \\
      &\left(-7964, 8538133\right) \\
\hline
19924 & \left(0, 1\right), \left(-4216, -84929585\right), \left(-904, 9265\right), \\
      &\left(-4216, 930001\right), \left(-904, -18020561\right), \left(904, 18020561\right), \left(4216, -930001\right), \\
      &\left(904, -9265\right), \left(4216, 84929585\right), \left(0, -1\right) \\
\hline
50510 & \left(0, 1\right), \left(-4556, -230590685\right), \left(4556, -467125\right), \left(-258, 85\right), \\
      &\left(-4556, 467125\right), \left(258, 13031665\right), \left(4556, 230590685\right),  \\
      &\left(-258, -13031665\right), \left(258, -85\right), \left(0, -1\right) \\
\hline
236788 & \left(0, 1\right), \left(23212, 5509495865\right), \left(-23212, -5509495865\right), \\
       &\left(-23212, 13172809\right), \left(659512, -152988152785\right),   \\
       &\left(-659512, 152988152785\right), \left(23212, -13172809\right),  \\
       &  \left(659512, 309152680241\right), \left(0, -1\right), \left(-659512, -309152680241\right)\\
\hline
715822 & (-413444, -318861253093), (413444, -22908942125), (142, -1), \\
       &(-142, -101646725), (-413444, 22908942125), (0, -1), (0, 1), \\
       &(413444, 318861253093), (-142, 1), (142, 101646725)\\
\hline
\end{array}
\end{equation*}
\begin{center} Table 3. Values of $a\in [0, 5\cdot 10^6]$ such that $Q_{4a,a}$ has 10 integral solutions  \end{center}

\begin{acknowledgement}
The authors are very grateful to Attila Peth{\H{o}} for bringing the problem to their attention.
\end{acknowledgement}

\noindent {\bf Appendix A}

The polynomials defining the system $S(m,n)$ for $m=2, n\in\{1,\ldots, 6\}$.

The case $(m, n)=(2, 2)$.
\begin{align*}
F_{0}=&a_0^3 b_2^5-3 a_0^2 a_2 b_0 b_2^4+3 a_0 a_2^2b_0^2b_2^3-a_2^3b_0^3b_2^2-3a_0a_2^2b_0b_1^2b_2^2+3a_2^3b_0^2b_1^2b_2+\\
      &-a_2^3b_0 b_1^4+b_2^5, \\
F_{1}=&a_2 b_1(a_2^2 b_1^4+3 a_0 a_2 b_2^2 b_1^2-4 a_2^2b_0 b_2 b_1^2+3
a_0^2 b_2^4-6 a_0 a_2 b_0 b_2^3+3 a_2^2 b_0^2 b_2^2),\\
G_{0}=&a_2^3 b_0^3-3 a_0 a_2^2 b_0 b_1^2-3 a_0 a_2^2 b_0^2 b_2+3 a_0^2 a_2 b_0 b_2^2+3 a_0^2 a_2 b_1^2b_2-a_0^3 b_2^3+a_2^3, \\
G_{1}=&b_1(3 a_2^2 b_0^2-6 a_0 a_2 b_2 b_0-a_0 a_2 b_1^2+3 a_0^2 b_2^2).
\end{align*}

\bigskip

The case $(m, n)=(2, 3)$.
\begin{align*}
F_{0}=&a_0^3b_3^4+a_2^3 b_0^2 b_3^2+3 a_0 a_2^2 b_0 b_2 b_3^2-2 a_2^3b_0b_1b_2b_3+a_2^3b_0b_2^3+b_3^4,\\
F_{1}=&-a_2^2(b_1 b_2-b_0 b_3)(-a_2 b_2^2-3 a_0 b_3^2+2 a_2 b_1 b_3),\\
F_{2}=&a_2^2 b_2^4+3 a_0 a_2 b_3^2 b_2^2-3 a_2^2 b_1 b_3 b_2^2+2 a_2^2 b_0 b_3^2 b_2+3 a_0^2 b_3^4-3 a_0a_2b_1b_3^3+a_2^2b_1^2b_3^2,\\
G_{0}=&3a_0^4b_2b_3^2-a_2a_0^3b_2^3-3a_2a_0^3b_0b_3^2-6a_2a_0^3b_1b_2b_3+3a_2^2a_0^2b_0b_2^2+3a_2^2a_0^2b_1^2b_2+\\
      &6a_2^2a_0^2b_0b_1b_3-3a_2^3a_0b_0b_1^2-3a_2^3a_0b_0^2b_2+a_2^4b_0^3+a_2^4,\\
G_{1}=&(a_2b_1-a_0b_3)\times \\
      &(-a_0^3b_3^2+3a_2a_0^2 b_2^2+2 a_2 a_0^2 b_1 b_3-a_2^2 a_0 b_1^2-6
a_2^2 a_0 b_0 b_2+3 a_2^3 b_0^2).
\end{align*}

\bigskip
The case $(m, n)=(2, 4)$.
\begin{align*}
F_{0}=&-a_2^3b_0b_3^2+a_2^3 b_0 b_2 b_4-3 a_0 a_2^2 b_0 b_4^2+a_0^3 b_4^3+b_4^3,\\
F_{1}=&a_2 b_1 b_3^2-a_2 b_0 b_4b_3+3 a_0 b_1 b_4^2-a_2 b_1 b_2 b_4,\\
F_{2}=&3 a_0^2 b_4^3-a_2^2 b_0 b_4^2-3 a_0 a_2 b_2 b_4^2+a_2^2 b_2^2 b_4+a_2^2 b_1 b_3b_4-a_2^2 b_2 b_3^2,\\
F_{3}=&a_2 b_3^3+3 a_0 b_4^2 b_3-2 a_2 b_2 b_4 b_3+a_2 b_1 b_4^2,\\
G_{0}=&a_0^6 b_4^3-3 a_2 a_0^5 b_2 b_4^2-3 a_2 a_0^5 b_3^2 b_4+3 a_2^2 a_0^4 b_2 b_3^2+3 a_2^2 a_0^4 b_0 b_4^2\\
      &+3 a_2^2 a_0^4 b_2^2 b_4+6a_2^2a_0^4b_1b_3b_4-a_2^3 a_0^3 b_2^3-3 a_2^3 a_0^3 b_0 b_3^2-6 a_2^3 a_0^3 b_1 b_2 b_3+\\
      &-3 a_2^3 a_0^3 b_1^2b_4-6a_2^3a_0^3b_0b_2 b_4+3 a_2^4 a_0^2 b_0 b_2^2+3 a_2^4 a_0^2 b_1^2 b_2+6 a_2^4 a_0^2 b_0 b_1 b_3\\
      &+3 a_2^4 a_0^2 b_0^2 b_4-3 a_2^5 a_0 b_0 b_1^2-3 a_2^5a_0 b_0^2 b_2+a_2^6 b_0^3+a_2^6,\\
G_{1}=&(a_2b_1-a_0b_3)(3a_0^4b_4^2-a_2a_0^3b_3^2-6a_2a_0^3b_2b_4+3a_2^2a_0^2b_2^2+2a_2^2a_0^2b_1b_3+\\
      &6a_2^2a_0^2b_0 b_4-a_2^3a_0b_1^2-6a_2^3a_0b_0b_2+3a_2^4b_0^2).
\end{align*}

\bigskip

The case $(m, n)=(2, 5)$.
\begin{align*}
F_{0}=&a_0^3 b_5^2+a_2^3 b_0 b_4+b_5^2,\quad F_{1}=b_1 b_4-b_0 b_5,\quad F_{2}=a_2^2 b_2 b_4-a_2^2 b_1 b_5+3 a_0^2 b_5^2,\\
F_{3}=&b_3 b_4-b_2 b_5,\quad F_{4}=a_2 b_4^2+3 a_0 b_5^2-a_2 b_3 b_5,\\
G_{0}=&-3 a_0^7 b_4 b_5^2+a_2 a_0^6 b_4^3+3 a_2 a_0^6 b_2 b_5^2+6 a_2 a_0^6 b_3 b_4 b_5-3 a_2^2 a_0^5 b_2 b_4^2+\\
      &-3 a_2^2 a_0^5 b_0 b_5^2-3 a_2^2 a_0^5 b_3^2 b_4-6 a_2^2 a_0^5 b_2
b_3 b_5-6 a_2^2 a_0^5 b_1 b_4 b_5+3 a_2^3 a_0^4 b_2 b_3^2\\
      &+3 a_2^3 a_0^4 b_0 b_4^2+3 a_2^3 a_0^4 b_2^2 b_4+6 a_2^3 a_0^4 b_1
b_3 b_4+6 a_2^3 a_0^4 b_1 b_2 b_5+6 a_2^3 a_0^4 b_0 b_3 b_5+\\
      &-a_2^4 a_0^3 b_2^3-3 a_2^4 a_0^3 b_0 b_3^2-6 a_2^4 a_0^3 b_1 b_2 b_3-3 a_2^4 a_0^3 b_1^2 b_4-6 a_2^4 a_0^3 b_0 b_2 b_4+\\
      &-6 a_2^4 a_0^3 b_0 b_1 b_5+3 a_2^5 a_0^2 b_0 b_2^2+3 a_2^5 a_0^2 b_1^2 b_2+6 a_2^5 a_0^2 b_0 b_1 b_3+3 a_2^5 a_0^2 b_0^2 b_4+\\
      &-3 a_2^6 a_0 b_0 b_1^2-3 a_2^6 a_0 b_0^2 b_2+a_2^7 b_0^3+a_2^7,\\
G_{1}=&(a_0^2 b_5-a_2 a_0 b_3+a_2^2 b_1)(-a_0^5b_5^2+3 a_2 a_0^4 b_4^2+2 a_2 a_0^4 b_3 b_5-a_2^2 a_0^3 b_3^2+\\
      &\quad -6 a_2^2 a_0^3 b_2 b_4-2 a_2^2 a_0^3 b_1 b_5+3 a_2^3 a_0^2 b_2^2+2 a_2^3 a_0^2 b_1 b_3+6 a_2^3 a_0^2 b_0 b_4+\\
      &\quad -a_2^4 a_0 b_1^2-6 a_2^4 a_0 b_0 b_2+3 a_2^5 b_0^2).
\end{align*}

\noindent {\bf Appendix B}

The polynomials defining the system $R(m,n)$ for $m=2, n\in\{2,\ldots, 9\}$ and $(m, n)=(3,2)$.

The case $(m,n)=(2,2)$.
\begin{align*}
F_{0}=&a_0^4 b_2^7-4 a_0^3 a_2 b_0 b_2^6+6 a_0^2 a_2^2 b_0^2 b_2^5-4 a_0 a_2^3 b_0^3 b_2^4-6 a_0^2a_2^2 b_0 b_1^2 b_2^4+a_2^4 b_0^4 b_2^3\\
      &+12 a_0 a_2^3 b_0^2 b_1^2 b_2^3-4 a_0 a_2^3 b_0 b_1^4b_2^2-6 a_2^4
b_0^3 b_1^2 b_2^2+5 a_2^4 b_0^2 b_1^4 b_2-a_2^4 b_0 b_1^6+4 b_2^7,\\
F_{1}=&b_1(a_2b_1^2+2 a_0 b_2^2-2 a_2 b_0 b_2) \times\\
      &(a_2^2 b_1^4+2 a_0 a_2 b_2^2 b_1^2-4 a_2^2 b_0 b_2b_1^2+2 a_0^2 b_2^4-4 a_0 a_2 b_0 b_2^3+2 a_2^2 b_0^2 b_2^2),\\
G_{0}=&-4 a_2^2 b_0^2+4 a_0 a_2b_1^2+8 a_0 a_2 b_0 b_2-4 a_0^2 b_2^2+5 a_2^2,\\
G_{1}=&b_1 \left(a_2 b_0-a_0 b_2\right)
\end{align*}

\bigskip
The case $(m, n)=(2, 3)$.
\begin{align*}
F_{0}=&a_0^4 b_3^6+4 a_0 a_2^3 b_0^2 b_3^4+6 a_0^2 a_2^2 b_0 b_2 b_3^4-2 a_2^4 b_0^2 b_1 b_3^3-8 a_0 a_2^3 b_0 b_1 b_2 b_3^3\\
      &+4 a_0 a_2^3 b_0 b_2^3 b_3^2+3 a_2^4 b_0^2 b_2^2 b_3^2+3 a_2^4 b_0
b_1^2 b_2 b_3^2-4 a_2^4 b_0 b_1 b_2^3 b_3+a_2^4 b_0 b_2^5+4 b_3^6,\\
F_{1}=&(b_1 b_2-b_0 b_3)\times \\
      &\; (a_2^2 b_2^4+4 a_0 a_2 b_3^2 b_2^2-4 a_2^2 b_1 b_3 b_2^2+2 a_2^2 b_0 b_3^2 b_2+6 a_0^2 b_3^4-8 a_0 a_2 b_1 b_3^3+3 a_2^2 b_1^2 b_3^2),\\
F_{2}=&a_2^3 b_2^6+4 a_0 a_2^2 b_3^2 b_2^4-5 a_2^3 b_1 b_3 b_2^4+4 a_2^3 b_0 b_3^2 b_2^3+6 a_0^2 a_2 b_3^4 b_2^2-12 a_0 a_2^2 b_1 b_3^3 b_2^2\\
      &+6 a_2^3 b_1^2 b_3^2 b_2^2+8 a_0 a_2^2 b_0 b_3^4 b_2-6 a_2^3 b_0 b_1 b_3^3 b_2+4 a_0^3 b_3^6-6 a_0^2 a_2 b_1 b_3^5+a_2^3 b_0^2 b_3^4\\
      &+4 a_0 a_2^2 b_1^2 b_3^4-a_2^3 b_1^3 b_3^3,\\
G_{0}=&4 a_0^3 b_3^2-4 a_2 a_0^2 b_2^2-8 a_2 a_0^2 b_1 b_3+4 a_2^2 a_0 b_1^2+8 a_2^2 a_0 b_0 b_2-4 a_2^3 b_0^2+5 a_2^3,\\
G_{1}=&(a_2 b_0-a_0 b_2)(a_2 b_1-a_0 b_3).
\end{align*}
\bigskip

The case $(m, n)=(2, 4)$.
\begin{align*}
F_{0}=&a_0^4 b_4^5-6 a_0^2 a_2^2 b_0 b_4^4+a_2^4 b_0^2 b_4^3+4 a_0 a_2^3 b_0 b_2 b_4^3-a_2^4 b_0 b_2^2 b_4^2-4 a_0 a_2^3 b_0 b_3^2 b_4^2+\\
      &-2 a_2^4 b_0 b_1 b_3 b_4^2+3 a_2^4 b_0 b_2 b_3^2 b_4-a_2^4 b_0 b_3^4+4 b_4^5,\\
F_{1}=&a_2^2 b_1 b_3^4-a_2^2 b_0 b_4 b_3^3+4 a_0 a_2 b_1 b_4^2 b_3^2-3 a_2^2 b_1 b_2 b_4 b_3^2-4 a_0 a_2 b_0 b_4^3 b_3\\
      &+2 a_2^2 b_1^2 b_4^2 b_3+2 a_2^2 b_0 b_2 b_4^2 b_3+6 a_0^2 b_1 b_4^4-2 a_2^2 b_0 b_1 b_4^3-4 a_0 a_2 b_1 b_2 b_4^3+a_2^2 b_1 b_2^2 b_4^2,\\
F_{2}=&4 a_0^3 b_4^5-4 a_0 a_2^2 b_0 b_4^4-6 a_0^2 a_2 b_2 b_4^4+a_2^3 b_1^2 b_4^3+4 a_0 a_2^2 b_2^2 b_4^3+2 a_2^3 b_0 b_2 b_4^3\\
      &+4 a_0 a_2^2 b_1 b_3 b_4^3-a_2^3 b_2^3 b_4^2-a_2^3 b_0 b_3^2 b_4^2-4 a_0 a_2^2 b_2 b_3^2 b_4^2-4 a_2^3 b_1 b_2 b_3 b_4^2+a_2^3 b_1 b_3^3 b_4\\
      &+3 a_2^3 b_2^2 b_3^2 b_4-a_2^3 b_2 b_3^4,\\
F_{3}=&a_2^2 b_3^5+4 a_0 a_2 b_4^2 b_3^3-4 a_2^2 b_2 b_4 b_3^3+3 a_2^2 b_1 b_4^2 b_3^2+6 a_0^2 b_4^4 b_3-2 a_2^2 b_0 b_4^3 b_3+\\
      &-8 a_0 a_2 b_2 b_4^3 b_3+3 a_2^2 b_2^2 b_4^2 b_3+4 a_0 a_2 b_1 b_4^4-2 a_2^2 b_1 b_2 b_4^3,\\
G_{0}=&a_0^4 b_4^2-a_2 a_0^3 b_3^2-2 a_2 a_0^3 b_2 b_4+a_2^2 a_0^2 b_2^2+2 a_2^2 a_0^2 b_1 b_3+2 a_2^2 a_0^2 b_0 b_4-a_2^3 a_0 b_1^2+\\
      &-2 a_2^3 a_0 b_0 b_2+a_2^4 b_0^2-a_2^4,\\
G_{1}=&(a_2 b_1-a_0 b_3)(a_0^2 b_4-a_2 a_0 b_2+a_2^2 b_0).
\end{align*}
\bigskip

The case $(m,n)=(2,5)$.
\begin{align*}
F_{0}=&a_2^4 b_0 b_4^3+a_2^4 b_0 b_2 b_5^2-2 a_2^4 b_0 b_3 b_4 b_5+4 a_0 a_2^3 b_0 b_4 b_5^2+a_0^4 b_5^4+4 b_5^4,\\
F_{1}=&a_2 b_1 b_4^3-a_2 b_0 b_5 b_4^2+4 a_0 b_1 b_5^2 b_4-2 a_2 b_1 b_3 b_5 b_4-4 a_0 b_0 b_5^3\\
      &+a_2 b_1 b_2 b_5^2+a_2 b_0 b_3 b_5^2,\\
F_{2}=&4 a_0^3 b_5^4-4 a_0 a_2^2 b_1 b_5^3+a_2^3 b_2^2 b_5^2+a_2^3 b_1 b_3 b_5^2+a_2^3 b_0 b_4 b_5^2+4 a_0 a_2^2 b_2 b_4 b_5^2+\\
      &-a_2^3 b_1 b_4^2 b_5-2 a_2^3 b_2 b_3 b_4 b_5+a_2^3 b_2 b_4^3,\\
F_{3}=&-a_2 b_3 b_4^3+a_2 b_2 b_5 b_4^2-a_2 b_1 b_5^2 b_4-4 a_0 b_3 b_5^2 b_4+2 a_2 b_3^2 b_5 b_4+a_2 b_0 b_5^3\\
      &+4 a_0 b_2 b_5^3-2 a_2 b_2 b_3 b_5^2,\\
F_{4}=&a_2^2 b_4^4+4 a_0 a_2 b_5^2 b_4^2-3 a_2^2 b_3 b_5 b_4^2+2 a_2^2 b_2 b_5^2 b_4+6 a_0^2 b_5^4-a_2^2 b_1 b_5^3+\\
      &-4 a_0 a_2 b_3 b_5^3+a_2^2 b_3^2 b_5^2,\\
G_{0}=&4 a_0^5 b_5^2-4 a_2 a_0^4 b_4^2-8 a_2 a_0^4 b_3 b_5+4 a_2^2 a_0^3 b_3^2+8 a_2^2 a_0^3 b_2 b_4+8 a_2^2 a_0^3 b_1 b_5+\\
      &-4 a_2^3 a_0^2 b_2^2-8 a_2^3 a_0^2 b_1 b_3-8 a_2^3 a_0^2 b_0 b_4+4
a_2^4 a_0 b_1^2+8 a_2^4 a_0 b_0 b_2-4 a_2^5 b_0^2+5 a_2^5,\\
G_{1}=&(a_0^2 b_4-a_2 a_0 b_2+a_2^2 b_0)(a_0^2 b_5-a_2 a_0 b_3+a_2^2 b_1).
\end{align*}
\bigskip

The case $(m, n)=(2,6)$.
\begin{align*}
F_{0}=&a_0^4 b_6^3-4 a_2^3 a_0 b_0 b_6^2-a_2^4 b_0 b_5^2+a_2^4 b_0 b_4 b_6+4 b_6^3,\\
F_{1}=&a_2b_1 b_5^2-a_2 b_0 b_6 b_5+4 a_0 b_1 b_6^2-a_2 b_1 b_4 b_6,\\
F_{2}=&-a_2^3b_2b_5^2-a_2^3 b_0 b_6^2+a_2^3 b_2 b_4 b_6+a_2^3 b_1 b_5 b_6-4 a_0 a_2^2 b_2b_6^2+4 a_0^3 b_6^3,\\
F_{3}=&a_2 b_3 b_5^2-a_2 b_2 b_6 b_5+a_2 b_1 b_6^2+4 a_0 b_3b_6^2-a_2 b_3 b_4 b_6,\\
F_{4}=&6 a_0^2 b_6^3-a_2^2 b_2 b_6^2-4 a_0 a_2 b_4 b_6^2+a_2^2b_4^2 b_6+a_2^2 b_3 b_5 b_6-a_2^2 b_4 b_5^2,\\
F_{5}=&a_2 b_5^3+4 a_0 b_6^2 b_5-2 a_2b_4 b_6 b_5+a_2 b_3 b_6^2,\\
G_{0}=&a_0^6 b_6^2-a_2 a_0^5 b_5^2-2 a_2 a_0^5 b_4 b_6+a_2^2 a_0^4 b_4^2+2 a_2^2 a_0^4 b_3 b_5+2 a_2^2 a_0^4 b_2 b_6-a_2^3 a_0^3 b_3^2+\\
      &-2 a_2^3 a_0^3 b_2 b_4-2 a_2^3 a_0^3 b_1 b_5-2 a_2^3 a_0^3 b_0 b_6+a_2^4 a_0^2 b_2^2+2 a_2^4 a_0^2 b_1 b_3\\
      &+2 a_2^4 a_0^2 b_0 b_4-a_2^5 a_0 b_1^2-2 a_2^5 a_0 b_0 b_2+a_2^6 b_0^2-a_2^6,\\
G_{1}=&(a_0^2 b_5-a_2 a_0 b_3+a_2^2 b_1)(-a_0^3b_6+a_2 a_0^2 b_4-a_2^2 a_0 b_2+a_2^3 b_0).
\end{align*}
\bibliography{all}

\vskip 1cm

\noindent Szabolcs Tengely, Institute of Mathematics, University of Debrecen, P.O.Box 400, 4010 Debrecen, Hungary. email:\;{\tt tengely@science.unideb.hu}\\
\bigskip

\noindent Maciej Ulas, Jagiellonian University, Faculty of Mathematics and Computer Science, Institute of
Mathematics, {\L}ojasiewicza 6, 30-348 Krak\'ow, Poland; email: {\tt maciej.ulas@uj.edu.pl}

\end{document}